\documentclass[a4paper,12pt]{article}
\usepackage{amsmath}
\usepackage{amssymb}
\usepackage[utf8]{inputenc}
\usepackage[icelandic,english]{babel}
\usepackage{t1enc}
\usepackage{amsmath}
\usepackage{amssymb}
\usepackage{mathrsfs}
\usepackage[colorlinks=false,
pdfauthor={Benedikt Steinar Magnússon},
pdftitle={Envelopes of disc functionals and omega-plurisubharmonic functions}]{hyperref}

\renewcommand{\Bbb}{\mathbb}

\newcommand{\C}{{\Bbb  C}}
\newcommand{\D}{{\Bbb D}}

\newcommand{\R}{{\Bbb  R}}

\newcommand{\T}{{\Bbb  T}}
\newcommand{\A}{{\cal A}}

\newcommand{\F}{{\cal F}}

\renewcommand{\O}{{\cal O}}

\newcommand{\PSH}{{\operatorname{{\cal PSH}}}}
\newcommand{\SH}{{\operatorname{{\cal SH}}}}

\newcommand{\sing}{{\operatorname{sing}}}

\renewcommand{\phi}{\varphi}
\renewcommand{\epsilon}{\varepsilon}

\newcommand{\eq}{\begin{equation}}
\newcommand{\ee}{\end{equation}}

\def\squarebox#1{\hbox to #1{\hfill\vbox to #1{\vfill}}}




\newtheorem{theorem+}           {Theorem}      [section]
\newtheorem{definition+}  [theorem+]  {Definition}
\newtheorem{lemma+}  [theorem+]  {Lemma}
\newtheorem{corollary+}  [theorem+]  {Corollary}
\newtheorem{proposition+}  [theorem+]  {Proposition}
\newtheorem{example+}  [theorem+]  {Example}
\newtheorem{question+}  [theorem+]  {Question}


\newenvironment{theorem}{\begin{theorem+}\sl}{\end{theorem+}\rm}
\newenvironment{definition}{\begin{definition+}\rm}{\end{definition+}\rm}
\newenvironment{lemma}{\begin{lemma+}\sl}{\end{lemma+}\rm}
\newenvironment{corollary}{\begin{corollary+}\sl}{\end{corollary+}\rm}
\newenvironment{proposition}{\begin{proposition+}\sl}{\end{proposition+}\rm}

\newenvironment{proof}{\medbreak\noindent{\it Proof:}\rm}{\hfill$\square$\rm}
\newenvironment{prooftx}[1]{\medbreak\noindent{\it #1:}\rm}{\hfill$\square$\rm}

\title{\Large \bf Extremal $\omega$-plurisubharmonic functions as envelopes of
disc functionals - Generalization and applications to the local theory}
\author{\large\bf  Benedikt Steinar Magn\'usson }

\date{\today}

\begin{document}
\maketitle

\begin{abstract}
	We generalize the Poletsky disc envelope formula for
	the function $\sup \{u\in \PSH(X,\omega) \,;\, u\leq \varphi\}$ 
	on any complex manifold $X$
	to the case where the real $(1,1)$-current $\omega=\omega_1-\omega_2$ 
	is the difference of two positive closed $(1,1)$-currents and 
	$\varphi$ is the difference of an $\omega_1$-upper semicontinuous
	function and a plurisubharmonic function. 
\end{abstract}

\tableofcontents

\section{Introduction}\label{introdution}

Many of the extremal plurisubharmonic functions studied in pluripotential
theory are given as suprema of classes of plurisubharmonic functions
satisfying some bound which is given by a function $\varphi$.  
Some of these extremal functions can
be expressed as envelopes of disc functionals.
The purpose of this paper is to generalize a disc envelope formula  
for extremal $\omega$-plurisubharmonic functions of the form $\sup
\{u\in \PSH(X,\omega) \,;\, u\leq \varphi\}$ proved in 
\cite{Mag:2010}.  Our main result is the following

\begin{theorem}
	Let $X$ be a complex manifold, $\omega = \omega_1 - \omega_2$
	be the difference of two closed positive $(1,1)$-currents on
	$X$, $\phi=\varphi_1-\varphi_2$ be the difference
	of an  $\omega_1$-upper semicontinuous function $\varphi_1$
	in $L^1_{\text{loc}}(X)$ and a plurisubharmonic function $\varphi_2$,
	and assume that $\{ u \in \PSH(X,\omega) ; u \leq \phi \}$ is non-empty.
	Then the function $\sup \{ u \in \PSH(X,\omega) ; 
	u \leq \phi \}$ is $\omega$-plurisubharmonic and
	for every $x \in X \setminus \sing(\omega)$,
	\begin{multline*}
		\sup\{u(x) ; u \in \PSH(X,\omega), u \leq \phi \}\\
		= \inf\{ -R_{f^*\omega}(0) + \int_\T \phi \circ f\,
		d\sigma ;  f \in \A_X, f(0) = x \}.
	\end{multline*} \label{th_main}
	If $\{ u \in \PSH(X,\omega) ; u \leq \phi \}$ is empty, then the
	right hand side is $-\infty$ for every $x \in X$.  
	Here $\A_X$ denotes the set of all
	closed analytic discs in $X$, $\sigma$ is the arc length measure on
	the unit circle $\T$ normalized to $1$, and $R_{f^*\omega}$ is the
	Riesz potential in the unit disc $\D$ of the pull-back $f^*\omega$ 
	of the current $\omega$ by the analytic disc $f$. 
\end{theorem}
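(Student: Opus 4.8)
Write $S\phi:=\sup\{u\in\PSH(X,\omega);\ u\leq\phi\}$ for the envelope on the left and $E\phi(x):=\inf\{-R_{f^*\omega}(0)+\int_\T\phi\circ f\,d\sigma;\ f\in\A_X,\ f(0)=x\}$ for the disc envelope on the right. The plan is to prove $S\phi\leq E\phi$ on $X\setminus\sing(\omega)$ directly, to reduce the reverse inequality to the case $\varphi_2=0$, and then to show that $E\phi$ is itself $\omega$-plurisubharmonic, which will yield $E\phi\leq S\phi$ and also settle the statement about the empty family. For the first inequality, if $u\in\PSH(X,\omega)$ and $f\in\A_X$ with $f(0)=x\notin\sing(\omega)$, then $dd^c(u\circ f)=f^*(dd^c u)\geq -f^*\omega$, so $u\circ f+R_{f^*\omega}$ is subharmonic on $\D$; since $R_{f^*\omega}$ vanishes on $\T$ and $u\leq\phi$, evaluating the sub-mean-value inequality at the origin gives $u(x)+R_{f^*\omega}(0)\leq\int_\T u\circ f\,d\sigma\leq\int_\T\phi\circ f\,d\sigma$, and taking the supremum over $u$ and the infimum over $f$ yields $S\phi(x)\leq E\phi(x)$. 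The routine points are that $\phi\circ f\in L^1(\T)$ — which follows from $\varphi_1$ being $\omega_1$-upper semicontinuous and in $L^1_{\text{loc}}$ and $\varphi_2$ plurisubharmonic — and that $R_{f^*\omega}(0)$ is finite off $\sing(\omega)$.

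Next I would remove $\varphi_2$. Since $\varphi_2$ is plurisubharmonic, $u\mapsto u+\varphi_2$ is a bijection of $\{u\in\PSH(X,\omega);\ u\leq\phi\}$ onto $\{v\in\PSH(X,\omega');\ v\leq\varphi_1\}$, where $\omega':=\omega-dd^c\varphi_2=\omega_1-(\omega_2+dd^c\varphi_2)$ is again a difference of closed positive currents; and via the Riesz decomposition of $\varphi_2\circ f$ on $\D$, which gives $\int_\T\varphi_2\circ f\,d\sigma=\varphi_2(f(0))-R_{f^*(dd^c\varphi_2)}(0)$, the disc functionals obey $-R_{f^*\omega}(0)+\int_\T\phi\circ f\,d\sigma=-R_{f^*\omega'}(0)+\int_\T\varphi_1\circ f\,d\sigma-\varphi_2(f(0))$. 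Hence both envelopes transform by subtracting $\varphi_2$, and it is enough to treat $\phi=\varphi_1$ ($\omega_1$-upper semicontinuous, in $L^1_{\text{loc}}$) with $\omega=\omega_1-\omega_2$. A further approximation replaces $\varphi_1$ by a decreasing sequence of better-behaved functions — continuous, or differences of local potentials — whose disc envelopes decrease to $E\phi$, and here the $\omega_1$-upper semicontinuity and $L^1_{\text{loc}}$ hypotheses are exactly what makes the limit legitimate.

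For the reverse inequality it suffices to show $E\phi$ is $\omega$-plurisubharmonic on each component on which it is not identically $-\infty$, and that $E\phi\leq\phi$. The latter is immediate from the constant disc $f\equiv x$; then $E\phi$ lies in the competing family, so $E\phi\leq S\phi$, and together with $S\phi\leq E\phi$ off $\sing(\omega)$ this gives $S\phi=E\phi$ there, while standard regularization arguments for $\omega$-plurisubharmonic functions show $S\phi$ is $\omega$-plurisubharmonic; moreover if the competing family is empty then $E\phi$ cannot be $>-\infty$ anywhere, for otherwise it would belong to the family, so the right-hand side is $-\infty$. Thus everything comes down to the $\omega$-plurisubharmonicity of $E\phi$. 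Local integrability of $E\phi$ is inherited from $E\phi\leq\phi\in L^1_{\text{loc}}$ and, from below, from $S\phi\leq E\phi$ with the family nonempty; its upper semicontinuity follows from that of the data together with the continuity of $f\mapsto R_{f^*\omega}(0)$ and $f\mapsto\int_\T\varphi_1\circ f\,d\sigma$ under perturbation of discs (alternatively one replaces $E\phi$ by its upper regularization and checks it stays below $\phi$). The substantive input is a sub-mean-value property: for every small affine disc $f(\zeta)=a+\zeta b$ in a coordinate ball about $a\notin\sing(\omega)$,
\[
E\phi(a)\ \leq\ -R_{f^*\omega}(0)+\int_\T E\phi\circ f\,d\sigma .
\]
Writing $\omega=dd^c\psi$ on the ball with $\psi=\psi_1-\psi_2$, $\psi_i$ plurisubharmonic, the Riesz decomposition of the delta-subharmonic function $\psi\circ f$ gives $-R_{f^*\omega}(0)=-\psi(a)+\int_\T\psi\circ f\,d\sigma$, which turns the displayed inequality into $(E\phi+\psi)(a)\leq\int_\T(E\phi+\psi)\circ f\,d\sigma$ — the ordinary sub-mean-value inequality for $w:=E\phi+\psi$ over small affine discs through $a$; with the upper semicontinuity and local integrability already noted, $w$ is plurisubharmonic, i.e. $E\phi$ is $\omega$-plurisubharmonic.

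The main obstacle is the displayed sub-mean-value inequality, which I would prove by Poletsky's glueing technique. Given $\epsilon>0$, choose for each $\zeta\in\T$ a disc $g_\zeta\in\A_X$ centered at $f(\zeta)$ with $-R_{g_\zeta^*\omega}(0)+\int_\T\phi\circ g_\zeta\,d\sigma<E\phi(f(\zeta))+\epsilon$, arranging measurable dependence on $\zeta$, and then graft the $g_\zeta$ onto $f$ near the boundary to produce a single analytic disc $h$ with $h(0)=a$ for which $-R_{h^*\omega}(0)+\int_\T\phi\circ h\,d\sigma$ is bounded above by $-R_{f^*\omega}(0)+\int_\T E\phi\circ f\,d\sigma$ up to an error tending to $0$. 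The delicate point is the bookkeeping for the Riesz potential: one must show that the potential of $h^*\omega$ splits, up to controlled error, into the potential of $f^*\omega$ plus the $\zeta$-average of the potentials of the $g_\zeta^*\omega$. This is where the present generality goes beyond the positive closed case of \cite{Mag:2010}: the negative part $\omega_2$ makes $R_{f^*\omega}=R_{f^*\omega_1}-R_{f^*\omega_2}$ non-monotone, so the estimates must be carried out separately for $\omega_1$ and $\omega_2$, and since $\varphi_1$ is merely $L^1_{\text{loc}}$ — so $\varphi_1\circ f$ need be neither bounded nor everywhere finite — all the boundary integrals and their behaviour under grafting must be justified by approximation. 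For these steps I would rely on the Riesz-potential and disc-surgery lemmas of \cite{Mag:2010} — monotonicity and continuity of $R$, Fubini-type identities for pull-backs, and the holomorphic grafting construction — adapting each to the difference $\omega_1-\omega_2$ and to $L^1_{\text{loc}}$ data. One could equally route the reverse inequality through Edwards' duality theorem, which reduces it to the subaveraging property of the functional $f\mapsto -R_{f^*\omega}(0)+\int_\T\phi\circ f\,d\sigma$, i.e. once more to the glueing lemma.
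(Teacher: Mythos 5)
Your outer architecture is sound and, in its reductions, matches the paper's: the inequality $\sup\F_{\omega,\phi}\leq EH_{\omega,\phi}$ via the subaverage property of $u\circ f+R_{f^*\omega}$, the absorption of $\varphi_2$ into the current by passing to $\omega'=\omega-dd^c\varphi_2$ together with the bijection $u\mapsto u+\varphi_2$ and the Riesz decomposition of $\varphi_2\circ f$, and the reduction of everything to the $\omega$-plurisubharmonicity of the disc envelope are exactly the steps the paper takes. The gap is in the one step that carries all the weight: the sub-mean-value inequality for the envelope along a disc $f$ through $a$. You propose to prove it by grafting near-optimal discs $g_\zeta$ onto $f$ and then showing that the Riesz potential of $h^*\omega$ ``splits, up to controlled error, into the potential of $f^*\omega$ plus the $\zeta$-average of the potentials of the $g_\zeta^*\omega$.'' But the discs $g_\zeta$ realizing $E\phi(f(\zeta))$ are arbitrary discs in $X$; the grafted disc $h$ therefore leaves every coordinate chart on which $\omega$ admits a potential, and $R_{h^*\omega}$ --- which is defined through local potentials --- cannot be compared to $R_{f^*\omega}$ and the $R_{g_\zeta^*\omega}$ by any computation confined to the coordinate ball in which you set up the affine disc. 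Naming this ``the delicate point'' and deferring it to an adaptation of the lemmas of \cite{Mag:2010} leaves unproved precisely the content that is new in this paper.

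For comparison, the paper avoids any splitting of Riesz potentials: Lemma \ref{le_globalpotential} shows that wherever $\omega$ has a potential $\psi$ near $f(\overline\D)$ the whole functional collapses to an ordinary Poisson functional, $H_{\omega,\phi}(f)+\psi(f(0))=H_{\phi+\psi}(f)$, and the glueing itself is then inherited from Poletsky's theorem for continuous weights after convolution (Section 3) together with the L\'arusson--Sigurdsson criterion, rather than redone by hand. The genuinely new work is Lemma \ref{lemma_potential}, which manufactures a single global potential of $\Phi^*\omega_i$ on a neighbourhood in $\C^2\times X$ of the union of the graph of $h$ and the graphs of the finitely many holomorphic families $F_j$ used in the grafting --- exactly the object your bookkeeping would need and does not construct --- packaged into the Reduction Theorem \ref{th_red}. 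A secondary soft spot: you derive upper semicontinuity of the envelope from ``continuity of $f\mapsto R_{f^*\omega}(0)$ under perturbation of discs,'' but this map is not continuous in general (both $\psi(f(0))$ and $\int_\T\psi\circ f\,d\sigma$ are merely semicontinuous, and in the same direction, so their difference controls nothing); the paper instead proves $\omega$-upper semicontinuity in Lemma \ref{lemma_redusc} by embedding a near-optimal disc in a holomorphic family.
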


Observe that the supremum on the left hand side defines a function on 
$X$, but the infimum on the right hand side defines a function of $x$
only on $X \setminus \sing(\omega)$. The reason is that for 
$f \in \A_X$ with $f(0) = x \in \sing(\omega)$ both terms
$R_{f^*\omega}(0)$ and $\int_\T \phi \circ f\, d\sigma$ may take the
value $+\infty$ or the value $-\infty$ and in these cases it is 
impossible to define their difference in a sensible way.
The infimum is extended to $X$ by taking limes superior as explained
in Section 5.

The theorem generalizes a few well-known results.  Our main theorem 
in  \cite{Mag:2010} is the special case $\varphi_2=0$ and $\omega_2=0$.

The case $\varphi_2=0$ and $\omega=0$ is Poletsky's theorem,
originally proved by Poletsky \cite{Pol:1993} 
and Bu and Schachermayer \cite{BuSch:1992} for domains
$X$ in $\C^n$, and generalized to arbitrary manifolds by L\'arusson 
and Sigurdsson \cite{LarSig:1998,LarSig:2003} and Rosay \cite{Ros:2003}.
The case $\varphi_1=0$ and $\omega=0$ is a result of  
Edigarian \cite{Edi:2003}.  The case $\varphi_2=0$ and $\omega=0$ with
a weak notion of upper semi-continuity was also treated by
Edigarian \cite{Edi:2002}.  
The case when
$\varphi_1=\varphi_2=0$, $\omega_1=0$ and $\omega_2=dd^cv$, for a
plurisubharmonic function $v$ on $X$, was proved by L\'arusson 
and Sigurdsson in \cite{LarSig:1998,LarSig:2003}.

We combine the last case to the case when $\omega = 0$ in the following corollary,
which unifies the Poisson functional and the Riesz functional from \cite{LarSig:1998}. 
\begin{corollary}
	Assume $v$ is a plurisubharmonic function on a complex manifold $X$ and let 
	$\phi = \phi_1 - \phi_2$ be the difference of an upper semicontinuous function 
	$\phi_1$ and a plurisubharmonic function $\phi_2$. Then
	\begin{multline*}
		\sup\{ u(x) ; u \in \PSH(X), u \leq \phi, \mathscr L(u) \geq \mathscr L(v)\}\\
		= \inf\Big\{ \frac{1}{2\pi}\int_\D \log|\cdot|\, \Delta(v\circ f) 
		+ \int_\T \phi\circ f\, d\sigma ; f \in \A_X, f(0)=x \Big\}.
	\end{multline*}
\end{corollary}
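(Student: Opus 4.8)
\medskip
\noindent\textit{Proof proposal.}
The plan is to read off the Corollary from Theorem~\ref{th_main}, applied with $\omega_1 = 0$ and $\omega_2 = dd^c v$, the difference $\phi = \phi_1 - \phi_2$ playing the role of $\varphi = \varphi_1 - \varphi_2$. Since $v$ is plurisubharmonic, $dd^c v$ is a closed positive $(1,1)$-current on $X$ and $0$ is trivially one, so $\omega = -dd^c v$ is of the required form. Because $\omega_1 = 0$, an $\omega_1$-upper semicontinuous function is just an upper semicontinuous one, so $\phi_1$ is admissible as $\varphi_1$ and $\phi_2$, being plurisubharmonic, as $\varphi_2$. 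The standing requirement $\varphi_1 \in L^1_{\text{loc}}(X)$ costs nothing: either $\{ u \in \PSH(X,\omega) ; u \leq \phi \}$ is empty --- in which case both sides of the Corollary are $-\infty$ (the left by convention, the right by Theorem~\ref{th_main}) and there is nothing to prove --- or it contains some $u$, and then $\phi_1 \geq u + \phi_2$ with $u$ and $\phi_2$ plurisubharmonic forces $\phi_1 \in L^1_{\text{loc}}(X)$.

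The first step is to match the two extremal classes. For $\omega = -dd^c v$ one has $dd^c u + \omega = \mathscr L(u) - \mathscr L(v)$ as currents, so $u \in \PSH(X,\omega)$ precisely when $u$ is upper semicontinuous, locally integrable and $\mathscr L(u) \geq \mathscr L(v)$; moreover $\omega \leq 0$ gives $dd^c u \geq dd^c v \geq 0$, so every such $u$ is automatically plurisubharmonic. Hence $\PSH(X,\omega) = \{ u \in \PSH(X) ; \mathscr L(u) \geq \mathscr L(v) \}$, and the left-hand side of the Corollary equals the left-hand side in Theorem~\ref{th_main}.

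The second step is to rewrite the Riesz term. For $f \in \A_X$ one has $f^*\omega = f^*\omega_1 - f^*\omega_2 = -dd^c(v\circ f)$, so $R_{f^*\omega} = -R_{dd^c(v\circ f)}$. Unwinding the Riesz potential through the Green function $\log|\cdot|$ of $\D$ --- equivalently, applying the Riesz representation to the subharmonic function $v\circ f$ on a neighbourhood of $\overline\D$ --- gives $R_{dd^c(v\circ f)}(0) = \tfrac{1}{2\pi}\int_\D \log|\cdot|\,\Delta(v\circ f)$, hence $-R_{f^*\omega}(0) = \tfrac{1}{2\pi}\int_\D \log|\cdot|\,\Delta(v\circ f)$, and the right-hand side of Theorem~\ref{th_main} becomes the right-hand side of the Corollary. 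Together with the first step, this already proves the asserted identity for every $x \in X \setminus \sing(\omega)$.

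What remains --- and what I expect to be the real obstacle --- is the passage from $X \setminus \sing(\omega)$ to all of $X$. Here $\sing(\omega) \subseteq \{ v = -\infty \}$ is pluripolar, and since $\omega_1 = 0$ the Riesz term $\tfrac{1}{2\pi}\int_\D \log|\cdot|\,\Delta(v\circ f) \in [-\infty, 0]$ is unambiguously defined for every $f$, so the infimum on the right-hand side of the Corollary makes sense on all of $X$ and coincides there with the limes-superior extension of Section~5. To obtain the equality on $\sing(\omega)$ as well, I would use that the left-hand side $U = \sup\{ u \in \PSH(X,\omega) ; u \leq \phi \}$ is $\omega$-plurisubharmonic, hence locally the sum of a plurisubharmonic function and a potential of $\omega$, so $U$ equals the limes superior of its restriction to the dense complement $X \setminus \sing(\omega)$; combined with the first three steps and the corresponding regularity of the disc-functional envelope from Section~5, this yields the Corollary on all of $X$. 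The delicate point is precisely this matching on the pluripolar exceptional set --- checking that the limes-superior extension of the infimum, the pointwise infimum, and the $\omega$-plurisubharmonic left-hand side all agree there.
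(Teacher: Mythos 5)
Your proposal is correct and follows essentially the same route as the paper, which simply specializes Theorem~\ref{th_main} to $\omega_1=0$, $\omega_2=dd^cv$, identifies $\PSH(X,-dd^cv)$ with $\{u\in\PSH(X);\mathscr L(u)\geq\mathscr L(v)\}$, and evaluates $-R_{f^*\omega}(0)$ via the Green function $\tfrac{1}{2\pi}\log|\cdot|$. The paper states this in two sentences without comment; your additional checks (that $\phi_1\in L^1_{\text{loc}}$ comes for free when the class is non-empty, and the matching on the pluripolar set $\sing(\omega)$ via the limes-superior extension) are details the paper leaves implicit, not a different argument.
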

Where $\mathscr L$ is the Levi form. 
This follows simply from the fact that if $\omega=-dd^c v$, then
$\PSH(X,\omega) = \{ u\in \PSH(X) ; \mathscr L(u) \geq \mathscr L(v) \}$ and
the Riesz potential $R_{f^*\omega}(0)$ is given by the first integral
on the right hand side.
Furthermore, since $\omega_1=0$ the function $\phi_1$ is $\omega_1$-usc
if and only if $\phi_1$ is usc.

The plan of the paper is the following.  In Section 2 we introduce the
necessary notions and results on $\omega$-upper semicontinuous
functions, $\omega$-plurisub\-harmonic functions, and analytic discs.  
In Section 3 we prove Theorem 1.1 in the special case 
when $\omega=0$.  In Section 4 we treat the case when the currents
$\omega_1$ and $\omega_2$ have global potentials. Section 5 contains an improved
version of the Reduction Theorem used in \cite{Mag:2010} which we use to
reduce the proof of Theorem 1.1 in the general case to the special case of global potentials.

This project was done under the supervision of my advisor Ragnar Sigurdsson, and I would like to thank him for his invaluable help writing this paper and
for all the interesting discussions relating to its topic.

\section{The $\omega$-plurisubharmonic setting}

First a few words about notation. We assume $X$ is a complex manifold of dimension $n$,
$\A_X$ will then be the \emph{closed analytic discs} in $X$, i.e.~the family
of all holomorphic mappings from a neighbourhood of the closed unit disc,
$\overline \D$, into $X$. The boundary of the unit disc $\D$ will be denoted by
$\T$ and $\sigma$ will be the arc length measure on $\T$ normalized to $1$. 
Furthermore, $D_r = \{ z\in \C ; |z| < r\}$ will be the disc centered at
zero with radius $r$.

We start by seeing that if $\omega$ is a closed, positive $(1,1)$-current on a manifold 
$X$, i.e.~acting on $(n-1,n-1)$-forms, then locally we have a potential for $\omega$, 
that is for every point $x$ there is a neighbourhood $U$ of $x$ and 
a psh function $\psi:U \to \R \cup \{-\infty\}$ such that $dd^c \psi = \omega$. This allows us 
to work with things locally in a similar fashion as the classical case, $\omega=0$. 
We will furthermore see that when there is a global potential, that is, when $\psi$ can be 
defined on all of $X$, then most of the questions about $\omega$-plurisubharmonic functions 
turn into questions involving plurisubharmonic functions.

Here we let $d$ and $d^c$ denote the real differential operators $d=\partial + \overline \partial$
and $d^c = i(\overline \partial - \partial)$. Hence, in $\C$ we have $dd^c u = \Delta u\, dV$
where $dV$ is the standard volume form.

\begin{proposition}
	 Let $X$ be a complex manifold with the second de Rham 
	 cohomology $H^2(X) = 0$, and the 
	 Dolbeault cohomology $H^{(0,1)}(X)=0$. Then every closed positive
	 $(1,1)$-current $\omega$ has a global
	 plurisubharmonic potential $\psi:X \to \R \cup \{-\infty\}$,
	 such that $dd^c \psi = \omega$.
	 \label{global_potential}
\end{proposition}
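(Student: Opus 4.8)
The plan is to use the standard $dd^c$-lemma–type argument, but one must be careful: a closed positive $(1,1)$-current is real, and its local potentials are genuinely plurisubharmonic, so we want a \emph{global} potential that is again plurisubharmonic (not merely a smooth form or a distribution). First I would observe that a closed positive $(1,1)$-current $\omega$ represents a class in $H^2(X;\R)$ (via the de Rham theory for currents, which computes the same cohomology as smooth forms). Since $H^2(X) = 0$, there is a real $1$-current $S$ with $dS = \omega$. Decomposing $S$ into bidegrees, write $S = S^{1,0} + S^{0,1}$; since $S$ is real, $S^{0,1} = \overline{S^{1,0}}$. Because $\omega$ is of pure bidegree $(1,1)$, the equation $dS = \omega$ splits into $\partial S^{1,0} = 0$, $\overline\partial S^{0,1} = 0$, and $\partial S^{0,1} + \overline\partial S^{1,0} = \omega$.

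Next I would invoke the hypothesis $H^{(0,1)}(X) = 0$. The $(0,1)$-current $S^{0,1}$ is $\overline\partial$-closed, so by the Dolbeault cohomology vanishing (again using that Dolbeault cohomology of currents agrees with that of smooth forms, i.e.~the Dolbeault lemma for currents) there is a distribution $g$ on $X$ with $\overline\partial g = S^{0,1}$. Then $S^{1,0} = \overline{S^{0,1}} = \overline{\overline\partial g} = \partial \bar g$. Substituting back gives
\[
\omega = \partial S^{0,1} + \overline\partial S^{1,0} = \partial \overline\partial g + \overline\partial \partial \bar g = \partial\overline\partial (g - \bar g).
\]
Setting $\psi_0 = i(\bar g - g)/c$ for the appropriate real constant $c$ so that $dd^c \psi_0 = i\partial\overline\partial \psi_0 \cdot(\text{const}) = \omega$ (matching the paper's normalization $dd^c = i(\overline\partial\partial - \partial\overline\partial)$ sign conventions), we obtain a real distribution $\psi_0$ with $dd^c \psi_0 = \omega$.

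The main obstacle — and the step that needs the most care — is promoting this distributional potential $\psi_0$ to an honest plurisubharmonic function. Since $dd^c \psi_0 = \omega \geq 0$ as a current, $\psi_0$ is a distribution whose $dd^c$ is a positive $(1,1)$-current; by the standard regularization result (a distribution with positive $dd^c$ agrees a.e.~with a unique plurisubharmonic function, locally), $\psi_0$ coincides with a plurisubharmonic function $\psi$ on each coordinate chart. These local plurisubharmonic representatives agree a.e.~on overlaps, hence agree everywhere (two plurisubharmonic functions equal a.e.~are equal, as each is the decreasing limit of its standard mollifications), so they glue to a global plurisubharmonic $\psi : X \to \R \cup \{-\infty\}$ with $dd^c\psi = \omega$. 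I would close by remarking that $\psi$ is unique up to addition of a pluriharmonic function, and under the stated cohomological hypotheses that ambiguity is a global pluriharmonic function on $X$.
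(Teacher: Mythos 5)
Your proof is correct and follows essentially the same route as the paper: use $H^2(X)=0$ to write $\omega = dS$ for a real $1$-current, split into bidegrees, solve $\overline\partial g = S^{0,1}$ using $H^{(0,1)}(X)=0$, and assemble the potential as an imaginary-part-type combination of $g$ and $\bar g$. The only difference is that you spell out the final step (a distribution with positive $dd^c$ is locally represented by a genuine plurisubharmonic function, and these representatives glue), which the paper compresses into the single sentence that $\psi$ is plurisubharmonic because $\omega$ is positive.
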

\begin{proof}
	Since $\omega$ is a positive current it is real, and from the fact $H^2(X) = 0$
	it follows that there is a real current $\eta$ such that $d\eta = \omega$.
	Now write
	$\eta = \eta^{1,0} + \eta^{0,1}$, where $\eta^{1,0} \in \Lambda'_{1,0}(X,\C)$
	and $\eta^{0,1} \in \Lambda'_{0,1}(X,\C)$.
	Note that $\eta^{0,1} = \overline{ \eta^{1,0}}$ since $\eta$ is real.
	We see, by counting degrees, that
	$\overline \partial \eta^{0,1} = \omega^{0,2} = 0$. 
	Then since $H^{(0,1)}(X)=0$,
	there is a distribution $\mu$ on $X$ such that
	$\overline \partial \mu = \eta^{0,1}$.
	Hence
	$$
		\eta = \overline{ \overline\partial \mu} + \overline\partial \mu
		= \partial \overline \mu + \overline \partial \mu.
	$$
	If we set $\psi = (\mu - \overline \mu)/2i$, then
	$$
		\omega = d\eta = d(\partial \overline \mu + \overline \partial \mu)
		= (\partial + \overline \partial)(\partial \overline \mu 
		+ \overline \partial \mu)
		= \partial \overline \partial (\mu - \overline \mu) 
		= dd^c \psi.
	$$
	Finally, $\psi$ is a plurisubharmonic
	function since $\omega$ is positive.
\end{proof}

If we apply this locally to a coordinate system biholomorphic to a polydisc and use
the Poincar\'e lemma we get the following.

\begin{corollary}
	For a closed, positive $(1,1)$-current $\omega$ there is 
	locally a plurisubharmonic potential
	$\psi$ such that $dd^c \psi = \omega$.
\end{corollary}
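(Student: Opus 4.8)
The plan is to localize and invoke Proposition~\ref{global_potential} on a polydisc, where both cohomological hypotheses are automatic. First I would fix a point $x \in X$ and a holomorphic chart $\Phi \colon U \to P$ carrying an open neighbourhood $U$ of $x$ biholomorphically onto a polydisc $P \subset \C^n$. Since $\Phi$ is biholomorphic, the pushed-forward current $(\Phi^{-1})^*(\omega|_U)$ is again a closed positive $(1,1)$-current, now living on $P$; I will construct the potential on $P$ and transport it back.

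Next I would verify that $P$ meets the hypotheses of Proposition~\ref{global_potential}. The polydisc $P$ is convex, hence contractible, so the ordinary Poincar\'e lemma (equivalently, the de Rham theorem for currents) gives $H^2(P) = 0$. The vanishing $H^{(0,1)}(P)=0$ is the $\overline\partial$-Poincar\'e lemma on a polydisc: every $\overline\partial$-closed $(0,1)$-current on $P$ is $\overline\partial$-exact, which one obtains from the classical solvability of the $\overline\partial$-equation on a polydisc (solving the one-variable $\overline\partial$-problem coordinate by coordinate, or citing H\"ormander's $L^2$ estimates on the pseudoconvex domain $P$), together with the fact that the Dolbeault complex of currents computes the same cohomology as the smooth one.

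With these two vanishings established, Proposition~\ref{global_potential} produces a plurisubharmonic function $\psi_P \colon P \to \R \cup \{-\infty\}$ with $dd^c\psi_P = (\Phi^{-1})^*(\omega|_U)$. Setting $\psi := \psi_P \circ \Phi$, plurisubharmonicity is preserved under the biholomorphism $\Phi$, so $\psi$ is plurisubharmonic on $U$, and $dd^c\psi = \omega|_U$ by naturality of $dd^c$ under pull-back. This is the claim.

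I do not expect a serious obstacle here; the only steps needing care are the routine bookkeeping that pull-back and push-forward by $\Phi$ preserve the bidegree, closedness, and positivity of currents, and the precise statement of the $\overline\partial$-Poincar\'e lemma at the level of currents rather than smooth forms.
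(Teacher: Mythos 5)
Your proof is correct and follows exactly the route the paper indicates: restrict to a coordinate chart biholomorphic to a polydisc, use the Poincar\'e lemma and the $\overline\partial$-Poincar\'e lemma to verify the vanishing of $H^2$ and $H^{(0,1)}$, and then invoke Proposition~\ref{global_potential}. The paper dispatches this in one sentence; your write-up just makes the same argument explicit.
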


Note that the difference of two potentials for $\omega$ is a pluriharmonic function, thus $C^\infty$. 
So the \emph{singular set} $\sing(\omega)$ of $\omega$ is well defined as the union of all $\psi^{-1}(\{-\infty\})$
for all local potentials $\psi$ of $\omega$.

In our previous article \cite{Mag:2010} on disc formulas 
for $\omega$-plurisubharmonic functions 
we assumed that $\omega$ was a positive current. Here 
we can use more general currents and in the following we assume 
$\omega = \omega_1 - \omega_2$, where $\omega_1$ and $\omega_2$
are closed, positive $(1,1)$-currents. 
We have plurisubharmonic local potentials
$\psi_1$ and $\psi_2$ for $\omega_1$ and $\omega_2$, respectively, 
and we write the potential for $\omega$ as
$$
\psi(x) = \left\{ \begin{array}{ll}  
	\psi_1(x) - \psi_2(x) & \text{if } x \notin \sing(\omega_1) \cap \sing(\omega_2)\\
	\limsup\limits_{y\to x} \psi_1(y)-\psi_2(y) & \text{if } x \in \sing(\omega_1) \cap \sing(\omega_2)
\end{array}\right.
$$ 
and the singular set of $\omega$ is defined as
$\sing(\omega) = \sing(\omega_1) \cup \sing(\omega_2)$.

The reason for the restriction to $\omega = \omega_1 - \omega_2$, 
which is the difference of two positive, closed $(1,1)$-currents, 
is the following. Our methods rely on the existence of local potentials
which are well defined psh functions, not only distributions, for 
we need to apply 
Riesz representation theorem to this potential composed with an
analytic disc.
With $\omega = \omega_1 - \omega_2$ we can work with the 
local potentials of $\omega_1$ and $\omega_2$ separately, and they are 
are given by psh functions.

\begin{definition}
	A function $u:X \to [-\infty,+\infty]$ is called \emph{$\omega$-upper semicontinuous} ($\omega$-usc)
	if for every $a\in \sing(\omega)$, $\limsup_{X\setminus \sing(\omega)\ni z \to a} u(z) = u(a)$ and
	for each local potential $\psi$ of $\omega$, defined on an open subset $U$ of $X$, $u+\psi$
	is upper semicontinuous on $U \setminus \sing(\omega)$ and locally bounded above around each 
	point of $\sing(\omega)$.
	\label{def_usc}
\end{definition}

Equivalently, we could say that $\limsup_{\sing(\omega) \not\ni z\to a} u(z) = u(a)$ 
for every $a \in \sing(\omega)$ and
$u+\psi$ extends as
$$
	\limsup_{\sing(\omega) \not\ni z\to a} (u+\psi)(z), \qquad \text{for } a \in \sing(\omega)
$$ 
to an upper semicontinuous function on $U$ with values in 
$\R \cup \{-\infty\}$. This extension will be denoted $(u+\psi)^\dagger$.
Note that $(u+\psi)^\dagger$ is not the upper semicontinuous regularization $(u+\psi)^*$
of the function $u+\psi$, but just a way to define the sum on $\sing(\omega)$
where possibly one of the terms is equal to $+\infty$ and the other might
be $-\infty$.

\begin{definition}
	An $\omega$-usc function $u:X \to [-\infty,+\infty]$ is called 
	\emph{$\omega$-plurisub\-harmonic} ($\omega$-psh)
	if $(u+\psi)^\dagger$ is psh on $U$ for every local 
	potential $\psi$ of $\omega$ defined 
	on an open subset $U$ of $X$. We let $\PSH(X,\omega)$ denote the set of all
	$\omega$-psh functions on $X$.
	\label{def_psh}
\end{definition}

Similarly we could say that $u$ is $\omega$-psh if it is $\omega$-usc
and $dd^c u \geq \omega$.

As noted after Definition 2.1 in \cite{Mag:2010} the conditions on the 
values of $u$ at 
$\sing(\omega)$ are to ensure that $u$ is Borel measurable
and that $u$ is uniquely determined from its values outside of 
$\sing(\omega)$. 

If $\omega'$ and $\omega$ are cohomologous then the classes $\PSH(X,\omega')$
and $\PSH(X,\omega)$ are essentially translations of each other.

\begin{proposition}\label{prop_biject}
	Assume both $\omega$ and $\omega'$ are the difference of two positive, closed $(1,1)$-currents. 
	If the current $\omega - \omega'$ has a global potential 
	$\chi = \chi_1 - \chi_2 : X \to [-\infty, +\infty]$, 
	where $\chi_1$ and $\chi_2$ are psh functions,
	then for every $u'\in \PSH(X,\omega')$ the function $u$ defined
	by $u(x) = u'(x) - \chi(x)$ for $x \notin \sing(\omega')\cup
	\sing(\omega)$ extends to an unique function in $\PSH(X,\omega)$ 
	and the map $\PSH(X,\omega') \to \PSH(X,\omega)$, 
	$u'\mapsto u$ is bijective.
\end{proposition}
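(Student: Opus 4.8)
The plan is to show that the prescription $u(x) = u'(x) - \chi(x)$ genuinely defines an $\omega$-psh function, that the analogous prescription $u'(x) = u(x) + \chi(x)$ defines an $\omega'$-psh function, and that these two maps are mutually inverse; bijectivity then follows formally. First I would fix a point $a \in X$ and choose, on a small coordinate neighbourhood $U$ of $a$, local plurisubharmonic potentials: $\psi_1, \psi_2$ for $\omega_1, \omega_2$ and $\psi_1', \psi_2'$ for $\omega_1', \omega_2'$, so that $\psi = \psi_1 - \psi_2$ and $\psi' = \psi_1' - \psi_2'$ are local potentials for $\omega$ and $\omega'$. The key algebraic observation is that $dd^c(\psi - \psi') = \omega - \omega' = dd^c\chi$ on $U$, so $\psi - \psi' - \chi$ is pluriharmonic, hence $C^\infty$, on $U$; call it $h$. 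Thus off the singular sets we have the pointwise identity $(u + \psi) = (u' - \chi) + \psi = (u' + \psi') + (\psi - \psi' - \chi) = (u' + \psi') + h$.

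Next I would use this identity to transfer the defining properties. Since $u' \in \PSH(X,\omega')$, the extension $(u' + \psi')^\dagger$ is psh on $U$ (Definition 2.5), and adding the smooth pluriharmonic $h$ keeps it psh; this psh function agrees with $u + \psi$ on $U \setminus \sing(\omega')$. To conclude that $u$ as defined on $X \setminus (\sing(\omega') \cup \sing(\omega))$ extends to an $\omega$-usc function with $(u+\psi)^\dagger$ psh, I need to handle two sources of singularity separately: points of $\sing(\omega')$ (where $u'$ and $\psi'$ individually may be badly behaved but $(u'+\psi')^\dagger$ is already under control) and points of $\sing(\omega) \setminus \sing(\omega')$ together with points of $\sing(\omega') \setminus \sing(\omega)$ coming from $\chi$. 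The cleanest route is: define the candidate extension of $u+\psi$ on all of $U$ to be $(u'+\psi')^\dagger + h$, check it is psh and therefore locally bounded above, and check it has the right $\limsup$ behaviour from $X \setminus \sing(\omega)$ onto $\sing(\omega)$ using that $(u'+\psi')^\dagger$ is usc and $h$ is continuous; this verifies both clauses of Definition 2.3 for $u$, hence $u \in \PSH(X,\omega)$. Uniqueness of the extension is immediate from the remark after Definition 2.3 that an $\omega$-psh function is determined by its values off $\sing(\omega)$.

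Finally, running the same construction with the roles of $\omega$ and $\omega'$ swapped and $\chi$ replaced by $-\chi$ (note $-\chi = \chi_2 - \chi_1$ is again a difference of psh functions, a global potential for $\omega' - \omega$) produces a map $\PSH(X,\omega) \to \PSH(X,\omega')$, $u \mapsto u'' $ with $u''(x) = u(x) + \chi(x)$ off the singular sets. Since $u'' = u'$ on the complement of $\sing(\omega) \cup \sing(\omega')$, uniqueness forces $u'' = u'$ everywhere, and symmetrically the other composition is the identity; so the map is a bijection.

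I expect the main obstacle to be the bookkeeping at the singular points rather than any deep idea: one must be careful that $\chi_1, \chi_2$ can send points to $+\infty$ or $-\infty$ independently of where $\psi_1, \psi_2, \psi_1', \psi_2'$ blow up, so that the naive pointwise difference $u' - \chi$ is a priori undefined on $\sing(\omega) \cup \sing(\omega')$; the resolution is to never work with that difference directly on the singular set but instead to define the $\dagger$-extension of $u + \psi$ through the already-regularized object $(u'+\psi')^\dagger + h$, and to verify the $\limsup$ conditions of Definitions 2.3 and 2.5 for this object. Everything else is a routine consequence of the pluriharmonicity (hence smoothness) of the difference of potentials.
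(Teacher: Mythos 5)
Your proposal is correct and follows essentially the same route as the paper: pass to local potentials, observe that the discrepancy between $\psi$, $\psi'$ and $\chi$ is pluriharmonic hence smooth, define $u$ through the already-regularized object $(u'+\psi')^\dagger$ plus that correction, and obtain bijectivity by running the construction with $-\chi$ and invoking uniqueness off the (pluripolar) singular sets. The only difference is cosmetic: the paper chooses the specific potential $\psi=(\psi_1'+\chi_1)-(\psi_2'+\chi_2)$ so that your correction term $h$ vanishes identically and $(u+\psi)^\dagger=(u'+\psi')^\dagger$ on the nose, which removes the bookkeeping you anticipate as the main obstacle.
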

\begin{proof}
	Let $\psi' = \psi_1' - \psi_2'$ be a local potential of $\omega'$. The functions 
	$\psi_1 = \psi_1' + \chi_1$ and $\psi_2 = \psi_2' + \chi_2$ are well defined as the sums 
	of psh functions.
	Then $\psi = \psi_1 - \psi_2$, extended over $\sing(\omega)$ as before, 
	is a local potential of $\omega$ since $\omega = \omega' + dd^c \chi$.

	Take $u' \in \PSH(X,\omega')$ and define a function $u$ on $X$ 
	by
	$$u(x) = \left\{ \begin{array}{ll}
		(u'+\psi')^\dagger(x) - \psi(x) & \text{for } x \in X \setminus \sing(\omega)\\
		\limsup\limits_{\sing(\omega)\not\ni y \to x} (u'+\psi')^\dagger(y) - \psi(y) 
		& \text{for } x \in \sing(\omega) 
		\end{array}\right.
	$$
	This definition is independent of $\psi'$ since any other local potential of $\omega'$
	differs from $\psi'$ by a continuous pluriharmonic function which 
	cancels out in the definition of $u$, due to the definition of $\psi$.

	Then $u+\psi = (u'+\psi')^\dagger$ on $X \setminus \sing(\omega)$ where the sum is well defined,
	since neither $u$ nor $\psi$ are $+\infty$ there. The right hand side is usc so $u+\psi$ is
	usc on $X\setminus \sing(\omega)$. But $(u'+\psi')^\dagger$ is usc on $X$ so the extension
	$(u+\psi)^\dagger$ also satisfies $(u+\psi)^\dagger = (u'+\psi')^\dagger$ and is therefore 
	psh since $u' \in \PSH(X,\omega')$. This shows that $u \in \PSH(X,\omega)$. 

	This map from $\PSH(X,\omega')$ to $\PSH(X,\omega)$ is injective because 
	$u = u' - \chi$ almost everywhere and the extension over 
	$\sing(\omega) \cup \sing(\omega')$ is unique.
	
	By changing the roles of
	$\omega$ and $\omega'$ we get an injection in the opposite direction
	which maps $v \in \PSH(X,\omega)$ to a function 
	$v' \in \PSH(X,\omega)$ defined as
	$v'=v+\chi$ outside of 
	$\sing(\omega) \cup \sing(\omega')$.
	These maps are clearly the inverses of each other because if we
	apply the composition of them to the function $u' \in \PSH(X,\omega')$ 
	we get an $\omega$-upper semicontinuous function which satisfies
	$(u' -\chi) + \chi = u'$ outside of 
	$\sing(\omega) \cup \sing(\omega')$. Since this function is equal
	to $u'$ almost everywhere they are the same, 
	which shows that the composition
	is the identity map.	
\end{proof}

\begin{proposition}
	If $\phi\colon X \to [-\infty,+\infty]$ is an $\omega$-usc function
	we define $\mathcal F_{\omega,\phi} = \{ u \in \PSH(X,\omega) ; u \leq \phi \}$.
	If $\mathcal F_{\omega,\phi} \neq \emptyset$ then
	$\sup \mathcal F_{\omega,\phi} \in \PSH(X,\omega)$,
	and consequently $\sup \mathcal F_{\omega,\phi} \in \mathcal F_{\omega,\phi}$.
\end{proposition}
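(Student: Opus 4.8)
The plan is to reduce the statement to the classical Brelot--Cartan theorem by working with local potentials. Fix a local potential $\psi=\psi_1-\psi_2$ of $\omega$ on an open set $U\subseteq X$. For each $u\in\mathcal F_{\omega,\phi}$ the function $(u+\psi)^\dagger$ is psh on $U$, and since $u\leq\phi$ and $\psi$ is finite off $\sing(\omega)$, the very definition of the $\dagger$-extension gives $(u+\psi)^\dagger\leq(\phi+\psi)^\dagger$ on all of $U$. Put $V_\psi:=\sup\{(u+\psi)^\dagger ; u\in\mathcal F_{\omega,\phi}\}$ on $U$. Then $V_\psi\leq(\phi+\psi)^\dagger$, and since $\phi$ is $\omega$-usc the right-hand side is usc with values in $\R\cup\{-\infty\}$, hence locally bounded above; so by the Brelot--Cartan (Choquet) theorem the upper semicontinuous regularization $V_\psi^*$ is psh on $U$, the set $\{V_\psi<V_\psi^*\}$ is pluripolar, and passing to regularizations keeps $V_\psi^*\leq(\phi+\psi)^\dagger$.

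Next I would patch these local functions together. If $\psi'$ is a second local potential on $U'$, then $\psi-\psi'$ is pluriharmonic, hence continuous, on $U\cap U'$ and $(u+\psi)^\dagger-(u+\psi')^\dagger=\psi-\psi'$ there, so $V_\psi^*-\psi=V_{\psi'}^*-\psi'$ on $U\cap U'$. Thus there is a well-defined function $w$ on $X\setminus\sing(\omega)$ with $w+\psi=V_\psi^*$ in each chart, and I extend it over $\sing(\omega)$ by $w(a)=\limsup_{X\setminus\sing(\omega)\ni z\to a}w(z)$. Since $V_\psi^*$ is psh and $\sing(\omega)$ is (locally) pluripolar, forming this upper limit from the complement of $\sing(\omega)$ recovers $V_\psi^*$, so $(w+\psi)^\dagger=V_\psi^*$ on $U$; together with $V_\psi^*\leq(\phi+\psi)^\dagger$ (local boundedness above near $\sing(\omega)$) and the $\limsup$ definition of $w$, this shows $w$ is $\omega$-usc, and then $w\in\PSH(X,\omega)$ because each $(w+\psi)^\dagger=V_\psi^*$ is psh.

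Finally I would identify $w$ with $\sup\mathcal F_{\omega,\phi}$. On $X\setminus\sing(\omega)$ we have $w+\psi=V_\psi^*\geq(u+\psi)^\dagger=u+\psi$, hence $w\geq u$ there, and the $\limsup$ definition of $w$ together with the $\omega$-usc property of $u$ propagates $w\geq u$ to $\sing(\omega)$; so $w\geq\sup\mathcal F_{\omega,\phi}$ pointwise. Conversely $w+\psi=V_\psi^*\leq(\phi+\psi)^\dagger=\phi+\psi$ off $\sing(\omega)$, and again the $\limsup$ definition plus $\omega$-usc of $\phi$ gives $w\leq\phi$ on all of $X$; thus $w\in\mathcal F_{\omega,\phi}$, so $w\leq\sup\mathcal F_{\omega,\phi}$. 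Therefore $\sup\mathcal F_{\omega,\phi}=w\in\PSH(X,\omega)$, and in particular it lies in $\mathcal F_{\omega,\phi}$.

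The step I expect to be the main obstacle is the bookkeeping at $\sing(\omega)$: verifying that the $\dagger$-extension of $w+\psi$ coincides with the already-constructed psh function $V_\psi^*$ — that is, that taking upper limits from outside the pluripolar set $\sing(\omega)$ loses nothing for a psh function — and checking that the inequalities $w\geq u$ and $w\leq\phi$, which are immediate off $\sing(\omega)$, survive passage to $\sing(\omega)$. Both points rely on the precise form of Definition~\ref{def_usc} and on the insensitivity of psh functions to pluripolar sets; the Brelot--Cartan step itself is routine once the local majorant $(\phi+\psi)^\dagger$ has been extracted from the $\omega$-upper semicontinuity of $\phi$.
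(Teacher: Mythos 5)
Your proposal is correct and follows essentially the same route as the paper: pass to a local potential, bound the family $\{(u+\psi)^\dagger\}$ by the usc majorant $(\phi+\psi)^\dagger$, take the upper semicontinuous regularization of the supremum to get a psh function, subtract $\psi$ and glue using the pluriharmonicity of differences of potentials, extend over $\sing(\omega)$ by $\limsup$, and propagate the inequalities $u\leq w\leq\phi$ to $\sing(\omega)$ via the $\limsup$ definitions. The only (cosmetic) difference is that you are more explicit than the paper about why the $\dagger$-extension of $w+\psi$ recovers the already-regularized psh function, i.e.\ that psh functions are insensitive to upper limits taken off the pluripolar set $\sing(\omega)$.
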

\begin{proof}
	Assume $\psi$ is a local potential of $\omega$ defined on $U\subset X$. 
	For $u \in \mathcal F_{\omega,\phi}$, the function $(u+\psi)^\dagger$ is a psh function on $U$ such that
	$(u+\psi)^\dagger \leq (\phi + \psi)^\dagger$. 
	The supremum of the family $\{ (u+\psi)^\dagger ; u \in \mathcal F_{\omega,\psi} \} \subset \PSH(U)$
	therefore defines a psh function $F_\psi(x) = ( \sup\{ (u+\psi)^\dagger(x) ; u \in \mathcal F_{\omega,\phi} \} )^*$ on $U$,
	with $F_\psi \leq (\phi +\psi)^\dagger$. We want to emphazise the difference
	between $\dagger$ and $*$. The extension of the function $u+\psi$ over
	$\sing(\omega)$, where the sum is possibly not defined, is denoted by
	$(u+\psi)^\dagger$ but $*$ is used to denote the upper semicontinuous 
	regularization of a function.

	Since the difference of two local potentials is a 
	continuous function, the function  
	$( \sup\{ (u+\psi)^\dagger ; u \in \mathcal F_{\omega,\phi} \} )^*-\psi$
	is independent of $\psi$. This means that
	$$
		S = F_\psi -\psi, \quad \text{on } U\setminus \sing(\omega),
	$$ 
	extended over $\sing(\omega)$ using $\limsup$, is a well-defined function on $X$. 
	
	Clearly 
	$S$ is $\omega$-psh since $(S+\psi)^\dagger = F_\psi$ which is psh, and $S$ satisfies
	$$
	\sup \mathcal F_{\omega,\phi} + \psi \leq F_\psi = S + \psi \leq \phi + \psi, \quad \text{on } U\setminus \sing(\omega).
	$$
	This implies 
	\begin{equation}
	\sup \mathcal F_{\omega,\phi} \leq S \leq \phi,
	\label{ineq_supF}
	\end{equation}
	on $U\setminus \sing(\omega)$. The later inequality holds also on $\sing(\omega)$ because of the definition of $S$
	at $\sing(\omega)$ and the $\omega$-upper semicontinuity of $\phi$.
	
	Furthermore, if $u\in \mathcal F_{\omega,\phi}$ and $a\in \sing(\omega)$, then
	\begin{equation*}
		u(a) = \limsup_{x\to a} u(x) \leq \limsup_{x\to a} [\sup \mathcal F_{\omega,\phi}(x)] \leq \limsup_{x\to a} S(x) = S(a).
	\end{equation*}
	Taking supremum over $u$ then shows that the first inequality in (\ref{ineq_supF}) holds also on $\sing(\omega)$. 
	Hence, $\sup \mathcal F_{\omega,\phi} \leq S$ and $S\in \mathcal F_{\omega,\phi}$, that is 
	$\sup \mathcal F_{\omega,\phi} = S \in \PSH(X,\omega)$.
\end{proof}

\begin{proposition}
	If $u,v \in \PSH(X,\omega)$ then $\max\{u,v\} \in \PSH(X,\omega)$.
\end{proposition}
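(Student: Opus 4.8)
The plan is to localize the problem and reduce it to the classical fact that the maximum of two plurisubharmonic functions is plurisubharmonic. Write $w:=\max\{u,v\}$ and fix an arbitrary local potential $\psi$ of $\omega$ on an open set $U\subset X$. Since all the conditions in Definitions \ref{def_usc} and \ref{def_psh} are of a local nature, it suffices to show, for every such pair $(U,\psi)$, that $w$ is $\omega$-usc on $U$ and that $(w+\psi)^\dagger$ is plurisubharmonic on $U$. On $U\setminus\sing(\omega)$ all the sums are honestly defined and $w+\psi=\max\{(u+\psi)^\dagger,(v+\psi)^\dagger\}$. Because $u,v\in\PSH(X,\omega)$, the functions $(u+\psi)^\dagger$ and $(v+\psi)^\dagger$ are plurisubharmonic on all of $U$ by Definition \ref{def_psh}, hence so is their maximum, which I denote $W$. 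In particular $w+\psi$ is upper semicontinuous on $U\setminus\sing(\omega)$ and, since the psh function $W$ is locally bounded above on $U$, $w+\psi$ is bounded above near every point of $\sing(\omega)$.

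Next I would dispose of the remaining part of the $\omega$-upper semicontinuity of $w$, namely the limsup identity at points of $\sing(\omega)$. This is immediate: $\limsup$ commutes with finite maxima of $[-\infty,+\infty]$-valued functions, so for $a\in\sing(\omega)$ one gets $\limsup_{\sing(\omega)\not\ni z\to a}w(z)=\max\{\limsup_{\sing(\omega)\not\ni z\to a}u(z),\limsup_{\sing(\omega)\not\ni z\to a}v(z)\}=\max\{u(a),v(a)\}=w(a)$, using that $u$ and $v$ are $\omega$-usc. Together with the previous paragraph this shows $w$ is $\omega$-usc, so that $(w+\psi)^\dagger$ is defined.

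It then remains to identify $(w+\psi)^\dagger$ with $W$. They coincide on $U\setminus\sing(\omega)$, so the only thing to check is that $W(a)=\limsup_{\sing(\omega)\not\ni z\to a}W(z)$ for each $a\in\sing(\omega)$; the inequality ``$\geq$'' is the upper semicontinuity of $W$. For ``$\leq$'' one uses that $\sing(\omega)\cap U$ is contained in the $-\infty$-locus of a plurisubharmonic potential and hence is a set of $2n$-dimensional Lebesgue measure zero, so by Fubini almost every sufficiently small sphere centered at $a$ meets $\sing(\omega)$ in a set of measure zero; plugging this into the sub-mean value inequality for the plurisubharmonic function $W$ over such spheres yields $W(a)\leq\limsup_{z\to a,\,z\notin\sing(\omega)}W(z)$. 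Hence $(w+\psi)^\dagger=W$ is plurisubharmonic on $U$; as $(U,\psi)$ was arbitrary, $w=\max\{u,v\}\in\PSH(X,\omega)$.

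The bulk of the argument is purely formal: off $\sing(\omega)$ everything reduces to ``the maximum of two plurisubharmonic functions is plurisubharmonic'' and to the bookkeeping already built into the $\dagger$-extension, while $\omega$-upper semicontinuity is inherited from $u$ and $v$ because maxima commute with $\limsup$. The one genuinely non-formal point, and the step I expect to be the main obstacle, is the last one: that the value of a plurisubharmonic function on the negligible set $\sing(\omega)$ is recovered as a limsup of its values from the complement. This is a standard property of plurisubharmonic functions, but it is exactly what makes the two ways of extending $w+\psi$ over $\sing(\omega)$ agree, and hence what makes the $\dagger$-extension of $w+\psi$ plurisubharmonic rather than merely almost-everywhere equal to a plurisubharmonic function.
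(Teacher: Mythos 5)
Correct, and essentially the paper's own argument: both proofs reduce to the identity $(\max\{u,v\}+\psi)^\dagger=\max\{(u+\psi)^\dagger,(v+\psi)^\dagger\}$ together with the classical fact that the maximum of two plurisubharmonic functions is plurisubharmonic. The only divergence is your final step, where the sub-mean-value argument over spheres avoiding $\sing(\omega)$ is heavier than necessary --- since $\dagger$ is by definition the limsup from the complement of $\sing(\omega)$ and limsup commutes with finite maxima (a fact you already invoke for the $\omega$-upper semicontinuity), the identity $W(a)=\limsup_{\sing(\omega)\not\ni z\to a}W(z)$ at singular points is purely formal and needs no plurisubharmonicity of $W$.
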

\begin{proof}
	For any local potential $\psi$ we know that
	$\max\{u,v\} + \psi = \max\{u+\psi,v+\psi\}$ is usc outside of $\sing(\omega)$ 
	and locally bounded above around each point of $\sing(\omega)$. Therefore, the
	extension $(\max\{u,v\} + \psi)^\dagger$ is equal to 
	$\max\{(u+\psi)^\dagger,(v+\psi)^\dagger\}$ which is psh, hence $\max\{u,v\}$ is 
	$\omega$-psh.
\end{proof}

It is important for us to be able to define the pullback of $\omega$ by a 
holomorphic disc because it is needed to include $\omega$ in the disc functional 
for the case of $\omega$-psh functions in Chapters 4 and 5.

Assume $f(0) \notin \sing(\omega)$ and let $\psi$ be a local potential of
$\omega$. We define $f^*\omega$, the pullback of $\omega$ by $f$, locally by $dd^c (\psi \circ f)$. Since the difference of 
two local potentials is pluriharmonic, this definition is 
independent of the choice of $\psi$, and it 
gives a definition of $f^*\omega$ on all of $\D$.
Note that $\psi \circ f$ is not identically $\pm \infty$ since $f(0) \notin \sing(\omega)$.

If $\omega=\omega_1-\omega_2$, then we could as well define the 
positive currents $f^*\omega_1$ and $f^*\omega_2$, using 
$\psi_1$ and $\psi_2$ respectively, and then define $f^*\omega = f^*\omega_1 - f^*\omega_2$. 
This gives the same result since $\psi \circ f = \psi_1 \circ f - \psi_2 \circ f$ almost 
everywhere.

\begin{proposition}\label{equi}
	The following are equivalent for a function $u$ on $X$.
	\begin{enumerate}
		\item[(i)]\ $u$ is in $\PSH(X,\omega)$.
		\item[(ii)]\ $u$ is $\omega$-usc and $f^*u \in \SH(\D,f^*\omega)$ 
			for all $f\in \A_X$ such that $f(\D) \not\subset \sing(\omega)$.
	\end{enumerate}
\end{proposition}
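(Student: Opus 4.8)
I would reduce both implications to the local description of $\PSH(X,\omega)$ by potentials, combined with the classical fact that an upper semicontinuous, locally bounded above function $v$ on an open set $\Omega$ in a complex manifold is plurisubharmonic if and only if $v\circ g$ is subharmonic for every analytic disc $g$ with $g(\overline\D)\subset\Omega$ (it suffices to use small affine discs in coordinate charts, which lie in $\A_X$). Two supporting facts will be used repeatedly. (a) If $f\in\A_X$ and $f(\D)\not\subset\sing(\omega)$, then for a local potential $\psi=\psi_1-\psi_2$ of $\omega$ near a point of $f(\overline\D)$ the function $\psi\circ f=\psi_1\circ f-\psi_2\circ f$ is a potential of $f^*\omega$ on the corresponding open subset of $\D$, and $\sing(f^*\omega)=f^{-1}(\sing(\omega))$ is polar in $\D$ — by connectedness of $\D$ the hypothesis $f(\D)\not\subset\sing(\omega)$ forces each $\psi_j\circ f$ to be genuinely subharmonic (not $\equiv-\infty$) near every point, so $f^{-1}(\sing(\omega))$ is locally a union of $-\infty$-loci of subharmonic functions. (b) A subharmonic function $h$ on a plane domain satisfies $h(a)=\limsup_{z\to a,\ z\notin E}h(z)$ for every polar set $E$ and every point $a$.

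For $(i)\Rightarrow(ii)$, assume $u\in\PSH(X,\omega)$. Then $u$ is $\omega$-usc by definition, and the $f^*\omega$-upper semicontinuity of $f^*u$ follows by transporting the corresponding conditions along $f$, $f^*u$ being the $\limsup$-extension of $u\circ f$ over $\sing(f^*\omega)$ in parallel with $u$ over $\sing(\omega)$. For the subharmonicity condition defining $\SH(\D,f^*\omega)$, fix $f$ with $f(\D)\not\subset\sing(\omega)$, a point $a\in\D$, a local potential $\psi$ of $\omega$ near $f(a)$, and put $v:=(u+\psi)^\dagger$, which is psh near $f(a)$; then $v\circ f$ is subharmonic near $a$. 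Off $\sing(f^*\omega)$ one has $v\circ f=(f^*u)+(\psi\circ f)$, and by (b) the left side equals the $\limsup$ of the right side from the complement at points of $\sing(f^*\omega)$; hence $v\circ f=\bigl((f^*u)+(\psi\circ f)\bigr)^\dagger$ is subharmonic near every $a$, which is exactly what is required. Thus $f^*u\in\SH(\D,f^*\omega)$.

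For $(ii)\Rightarrow(i)$, $u$ is $\omega$-usc by hypothesis, so it is enough to show that for each local potential $\psi$ of $\omega$ on an open $U\subset X$ the usc function $v:=(u+\psi)^\dagger$ is psh on $U$. First, $v$ is psh on $U':=U\setminus\sing(\omega)$: there $v=u+\psi$ with $\psi$ finite, and for any $g\in\A_X$ with $g(\overline\D)\subset U'$ the function $\psi\circ g$ is a finite potential of $g^*\omega$ with $\sing(g^*\omega)=\emptyset$, so $(ii)$ says $g^*u=u\circ g\in\SH(\D,g^*\omega)$, i.e.\ $v\circ g=u\circ g+\psi\circ g$ is subharmonic; hence $v|_{U'}$ is psh. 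Finally, $\sing(\omega)\cap U$ is closed and pluripolar (it is the set where the psh function $\psi_1+\psi_2$ equals $-\infty$), $v$ is usc and locally bounded above on $U$, and by definition of the $\dagger$-extension $v$ equals on $\sing(\omega)$ the $\limsup$ of $v|_{U'}$; so by the removable-singularity theorem for plurisubharmonic functions across closed pluripolar sets, $v$ is the (psh) extension of $v|_{U'}$ and hence psh on $U$. Therefore $u\in\PSH(X,\omega)$.

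The only genuinely delicate part is the bookkeeping over the singular sets in $(i)\Rightarrow(ii)$ — tracking where $f^*u$, $\psi\circ f$ and the $\dagger$-extensions on $\D$ coincide with the honest composites — after which the identity $(f^*u+\psi\circ f)^\dagger=(u+\psi)^\dagger\circ f$ carries the argument; in $(ii)\Rightarrow(i)$ the single nontrivial input is the removable-singularity theorem, and passing to discs inside $U'$ is routine.
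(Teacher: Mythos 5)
The paper does not actually print a proof of this proposition; it states that the argument is the same as that of Proposition~2.3 in \cite{Mag:2010} (the case $\omega_2=0$), so there is no in-text proof to match yours against line by line. Your overall strategy --- reduce everything to local potentials, obtain (i)$\Rightarrow$(ii) by composing $(u+\psi)^\dagger$ with $f$ and tracking the $\dagger$-extensions, and obtain (ii)$\Rightarrow$(i) by testing with small discs and then extending across $\sing(\omega)$ --- is the expected route.

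There is, however, a genuine gap in your (ii)$\Rightarrow$(i) step: you assert that $\sing(\omega)\cap U$ is \emph{closed} and pluripolar. It is pluripolar, being contained in the $-\infty$-set of the psh function $\psi_1+\psi_2$, but such sets are only $G_\delta$ and can be dense in $U$ (take $\psi_1=\sum_k c_k\log|z-a_k|$ with $(a_k)$ dense and $c_k$ summable). When $\sing(\omega)$ is not closed, two of your steps fail at once: $U'=U\setminus\sing(\omega)$ is not open, so ``$v|_{U'}$ is psh'' is not meaningful and there may be \emph{no} disc $g$ with $g(\overline\D)\subset U'$ through a given point of $U'$; and the removable-singularity theorem you invoke requires the exceptional set to be closed. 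The repair is to use the full strength of hypothesis (ii), which applies to every disc $g$ with $g(\D)\not\subset\sing(\omega)$, in particular to small affine discs that \emph{do} meet $\sing(\omega)$: for such $g$ the function $\psi\circ g$ is a potential of $g^*\omega$, the set $g^{-1}(\sing(\omega))$ is polar (hence null on circles), and $(g^*u+\psi\circ g)^\dagger$ is subharmonic on the whole parameter disc, which yields the sub-mean-value inequality for $v=(u+\psi)^\dagger$ at centers outside $\sing(\omega)$; centers inside $\sing(\omega)$ are then handled through the $\limsup$ definition of $v$ there together with Fatou's lemma and the upper semicontinuity of $v$. A smaller point in (i)$\Rightarrow$(ii): the claim that $v\circ f$ is subharmonic near $a$ presupposes $v\circ f\not\equiv-\infty$ there; by connectedness of $\D$ this can only fail in the degenerate case where $u\circ f+\psi\circ f$ is $-\infty$ wherever it is defined, which can occur even when $f(\D)\not\subset\sing(\omega)$ (e.g.\ $f$ mapping into $\{u=-\infty\}$) and is resolved only by the convention adopted for $\SH(\D,f^*\omega)$; you should at least flag this.
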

	The proof is the same as the proof of Proposition 2.3 in \cite{Mag:2010}, where $\omega_2=0$.

\section{Proof in the case $\omega=0$}

We start by proving the main theorem in the case when $\omega_1 = \omega_2 = 0$. 
Note that if $\omega_1=0$ then $\omega_1$-upper semicontinuity is 
equivalent to upper semicontinuity.

In the following we assume $\phi_1$ is an usc
$L^1_\text{loc}$ function and $\phi_2$ is a psh function on a
complex manifold $X$. 
We define the function $\phi:X \to [-\infty,+\infty]$ by 
$$
		\phi(x) = \left\{ \begin{array}{ll}
		\phi_1(x) - \phi_2(x) & \text{if } \phi_2(x) \neq -\infty \\
		\limsup\limits_{\phi_2^{-1}(-\infty) \not\ni y \to x }\phi_1(y)-\phi_2(y) & \text{if } \phi_2(x) = -\infty.
		\end{array}\right.
	\label{eq_phi}
$$

Define $\A_X$ as the set of all closed analytic discs in $X$, that is holomorphic functions from a 
neighbourhood of the closed unit disc in $\C$ into $X$. 
The \emph{Poisson disc functional} $H_\phi:\A_X \to [-\infty,+\infty]$ 
of $\phi$ is defined as 
$H_\phi(f) = \int_\T \phi \circ f\, d\sigma$ for $f \in \A_X$, and
the \emph{envelope} $EH_\phi:X \to [-\infty,+\infty]$ of $H_\phi$ is defined as 
$$
	EH_\phi(x) = \inf\{ H_\phi(f); f\in \A_X, f(0) = x \}.
$$

The definition of the function $\phi$ should be viewed alongside
Lemma \ref{lemma_gooddiscs}, which states roughly that it  
suffices to look at discs not lying entirely in $\phi^{-1}(\{-\infty\})$.

Note that $\phi$ is a $L^1_\text{loc}$ function and that the Poisson functional
satisfies $H_\phi = H_{\phi_1} - H_{\phi_2}$, when $H_{\phi_1}(f) \neq -\infty$ or
$H_{\phi_2}(f) \neq -\infty$.

We start by showing that Theorem \ref{th_main} holds true on an open subset $X$ of $\C^n$ using
convolution.

Let $\rho:\C^n \to \R$  be a non-negative $C^\infty$ radial function with support 
in the unit ball $\mathbb B$ in $\C^n$ 
such that $\int_{\mathbb B} \rho\, d\lambda = 1$, where $\lambda$ is the
Lebesgue measure in $\C^n$. For an open set $X\subset \C^n$ we let
$X_\delta = \{ x \in X ; d(x,X^c) > \delta \}$ and if $\chi$ is in $L^1_\text{loc}(X)$
we define the convolution
$\chi_\delta(x) = \int_\mathbb B \chi(x - \delta y)\rho(y)\, d\lambda(y)$ which
is a $C^\infty$ function on $X_\delta$. It is well known that if $\chi \in \PSH(X)$
then $\chi_\delta \geq \chi$ and $\chi_\delta \searrow \chi$ as $\delta \searrow 0$.

\begin{lemma}
	Assume $X \subset \C^n$ is open and $\phi = \phi_1 - \phi_2$ as above. If
	$f \in \A_{X_\delta}$, then there exists $g \in \A_X$ such that $f(0)=g(0)$ and
	$H_\phi (g) \leq H_{\phi_\delta} (f)$, and consequently, $EH_\phi|_{X_\delta} \leq EH_{\phi_\delta}$.
	\label{H_ineq}
\end{lemma}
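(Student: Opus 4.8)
The plan is to exploit the standard fact that convolution of a plurisubharmonic function with a radial mollifier can be realized as an average of translates, and to combine this with the fact that translation by a small vector sends an analytic disc in $X_\delta$ to an analytic disc in $X$. Concretely, for $f \in \A_{X_\delta}$ and $y \in \mathbb B$, the translated disc $f_y := f + \delta y$ lies in $\A_X$ because the image of $f$ stays at distance $>\delta$ from $X^c$. The definition of the convolution gives, for $\chi \in \{\phi_1, \phi_2\}$,
\[
	\chi_\delta \circ f(\zeta) = \int_{\mathbb B} \chi\big(f(\zeta) - \delta y\big)\,\rho(y)\, d\lambda(y),
\]
and integrating over $\T$ against $\sigma$ and applying Fubini (the signs are controlled: $\phi_1$ is bounded above locally, $\phi_2$ is psh hence locally bounded above, so the positive parts cause no trouble) yields
\[
	H_{\chi_\delta}(f) = \int_{\mathbb B} H_{\chi}(f_{-y})\,\rho(y)\, d\lambda(y).
\]
Hence $H_{\phi_\delta}(f) = \int_{\mathbb B}\big(H_{\phi_1}(f_{-y}) - H_{\phi_2}(f_{-y})\big)\rho(y)\,d\lambda(y) = \int_{\mathbb B} H_\phi(f_{-y})\,\rho(y)\,d\lambda(y)$, using the additivity $H_\phi = H_{\phi_1} - H_{\phi_2}$ noted before the lemma (which is valid here since $\phi_2$ psh means $H_{\phi_2}(f_{-y}) \neq -\infty$ for a.e.\ $y$, as the circle cannot map entirely into the pluripolar set $\phi_2^{-1}(-\infty)$ for a.e.\ translate — if it did for a positive-measure set of $y$, $\phi_2$ would be $-\infty$ on a set of positive measure).

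The second step extracts a single good disc. Since $H_{\phi_\delta}(f)$ is the $\rho\,d\lambda$-average of the quantity $H_\phi(f_{-y})$ over $y \in \mathbb B$, there must exist some $y_0 \in \mathbb B$ with
\[
	H_\phi(f_{-y_0}) \leq H_{\phi_\delta}(f).
\]
Set $g := f_{-y_0} = f - \delta y_0 \in \A_X$. Then $g(0) = f(0) - \delta y_0$, which is \emph{not} equal to $f(0)$; to fix this, I note that the translation parameter should be chosen so the center is preserved. The cleaner route: replace the family $\{f_{-y}\}_{y}$ by $\{f - \delta y + \delta y\}$ — no. Instead, observe that the correct statement uses $g(0)=f(0)$ only if we re-center, so I will instead argue directly that $EH_\phi(f(0)) \le H_\phi(f_{-y_0})$ is not what is wanted either. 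The right fix is: the lemma as stated claims $f(0) = g(0)$, so I should mollify in a way that fixes the base point. This is achieved by noting $\phi_\delta(f(0)) $ is what enters $H_{\phi_\delta}$, and actually re-reading, the honest construction is to take $g(\zeta) = f(\zeta) - \delta y_0$ and accept $g(0) = f(0) - \delta y_0$; then $EH_\phi(f(0)-\delta y_0) \leq H_{\phi_\delta}(f)$. To obtain the clean form with $g(0)=f(0)$, precompose: given the center $x = f(0) \in X_\delta$, for each $y$ the disc $\tilde f_y(\zeta) := f(\zeta) + \delta y$ has $\tilde f_y(0) = x + \delta y \neq x$, so this still moves the center. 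The resolution actually used in the literature (Poletsky, Lárusson--Sigurdsson) is to keep the center fixed by using discs $\zeta \mapsto f(\zeta) - \delta y + \delta y \cdot(\text{something vanishing at }0)$; more simply, one shows $EH_\phi$ is the relevant envelope and uses semicontinuity. For the purposes of this plan I will carry out the translation argument and then address the base-point by the observation that we may compose $f$ with a Möbius-type adjustment, or — cleanest — by noting that the statement we actually need downstream is $EH_\phi|_{X_\delta} \le EH_{\phi_\delta}$, which follows from $H_\phi(g) \le H_{\phi_\delta}(f)$ together with $g(0)$ ranging over a neighborhood of $f(0)$ and upper semicontinuity of $EH_\phi$; so I will prove the displayed inequality for $g$ with $g(0)$ close to $f(0)$ and pass to the envelope.

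The main obstacle is precisely this base-point bookkeeping together with the integrability justification for Fubini: one must be careful that $H_\phi(f_{-y})$ is a measurable function of $y$ that is bounded above (so the average makes sense in $[-\infty, +\infty)$) — this uses that $\phi_1$ is locally bounded above (being usc) and $\phi_2 \geq$ some psh, hence $\phi = \phi_1 - \phi_2$ is locally bounded above, uniformly along the compact family of translates $\{f_{-y} : y \in \overline{\mathbb B}\}$ whose images lie in a fixed compact subset of $X$. Once boundedness above is in hand, the existence of $y_0$ with $H_\phi(f_{-y_0}) \le H_{\phi_\delta}(f)$ is immediate from the mean-value property, and the envelope inequality $EH_\phi|_{X_\delta} \le EH_{\phi_\delta}$ follows by taking the infimum over $f$, after reconciling the centers as indicated above.
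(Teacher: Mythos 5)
Your first step (realizing $H_{\phi_\delta}(f)$ as the $\rho\,d\lambda$-average over $y\in\mathbb B$ of Poisson functionals of perturbed discs, then extracting a $y_0$ below the average) is the right skeleton, but the proposal has a genuine gap exactly where you flag it: the base point. The lemma asserts $g(0)=f(0)$, and your translated discs $f-\delta y_0$ do not satisfy this. The missing idea is the change of variables $y\mapsto ty$ for $t\in\T$: since $\rho$ is radial and $|t|=1$, the measure $\rho(y)\,d\lambda(y)$ is invariant under this rotation, so
\[
  H_{\phi_\delta}(f)=\int_\T\int_{\mathbb B}\phi\bigl(f(t)-\delta y\bigr)\rho(y)\,d\lambda(y)\,d\sigma(t)
  =\int_{\mathbb B}\Bigl(\int_\T\phi\bigl(f(t)-\delta t y\bigr)\,d\sigma(t)\Bigr)\rho(y)\,d\lambda(y),
\]
and the mean-value argument now produces $y_0$ with $H_\phi(g)\le H_{\phi_\delta}(f)$ for the disc $g(t)=f(t)-\delta t y_0$, which is still in $\A_X$ (the perturbation has modulus $<\delta$) and satisfies $g(0)=f(0)$ because the perturbation carries the factor $t$. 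Without this twist the statement you can prove is strictly weaker than the lemma.

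Your proposed repair --- prove only $EH_\phi(f(0)-\delta y_0)\le H_{\phi_\delta}(f)$ and recover the envelope inequality from upper semicontinuity of $EH_\phi$ --- does not work, for two reasons. First, it is circular: in this paper the upper semicontinuity of $EH_\phi$ is \emph{deduced from} this lemma (together with Poletsky's theorem applied to the continuous weights $\phi_\delta$), so it is not available here. Second, even granting it, the mean-value property only yields that the set of admissible $y_0$ has positive measure; it does not let $g(0)$ sweep a full neighbourhood of $f(0)$, nor does upper semicontinuity convert an inequality on a positive-measure set near $f(0)$ into one at $f(0)$. The integrability/Fubini discussion in your first paragraph is essentially the paper's and is fine; the lemma stands or falls on the rotation trick.
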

\begin{proof}
	Since $\phi_1$ is usc and $\phi_2$ is psh the function
	$(t,y) \mapsto \phi(f(t)-\delta y)$ is integrable on $\T \times \mathbb B$.
	By using the change of variables $y \to ty$ where $t\in \T$ and
	that $\rho$ is radial we see that
	\begin{eqnarray*}
		H_{\phi_\delta}(f)
		&=& \int_\T \int_{\mathbb B} 
		\phi (f(t)-\delta y)\rho(y)\, d\lambda(y)\, d\sigma(t)\\
		&=& \int_\T \int_{\mathbb B} 
		\phi (f(t)-\delta ty)\rho(y)\, d\lambda(y)\, d\sigma(t)\\
		&=& \int_{\mathbb B} \bigg( \int_\T 
		\phi (f(t)-\delta ty)\, d\sigma(t) \bigg) \rho(y)\, d\lambda(y).
	\end{eqnarray*}
	From measure theory we know that for every measurable function we can find a 
	point where the
	function is less than or equal to its integral with respect to a
	probability measure. Applying this to the function
	$y \mapsto \int_\T \phi (f(t)-\delta ty)\, d\sigma(t)$ 
	and the measure $\rho\, d\lambda$
	we can find $y_0 \in \mathbb B$ such that
	$$
		H_{\phi_\delta}(f) 
		\geq \int_\T \phi (f(t)-\delta ty_0)\, d\sigma(t) = H_{\phi}(g),
	$$
	if $g\in \A_X$ is defined by $g(t) = f(t)-\delta ty_0$. 
	It is clear that $g(0)=f(0)$.

	By taking the infimum over $f$, we see that $EH_\phi|_{X_\delta} \leq  EH_{\phi_\delta}$.  
\end{proof}

Note that
$EH_\phi|_{X_\delta}$ is the restriction of the function $EH_\phi$ to $X_\delta$, 
but not the envelope of the functional $H_\phi$ restricted to $\A_{X_\delta}$. 
There is a subtle difference between these two, and in general they are different.
The function $EH_{\phi_\delta}$ however, is only defined on $X_\delta$ since
the disc functional $H_{\phi_\delta}$ is defined on $\A_{X_\delta}$.

\begin{lemma}
	If $\phi = \phi_1 - \phi_2$ as above, then for every $f\in \A_X$ there is a limit
	$\lim_{\delta \to 0} H_{\phi_\delta}(f) \leq H_\phi(f)$ and it follows that for every
	$x \in X$, 
	$$
	\lim_{\delta \to 0} EH_{\phi_\delta} (x) = EH_\phi(x).
	$$
	\label{EH_lim}
\end{lemma}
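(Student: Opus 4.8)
The plan is to first establish the pointwise limit statement $\lim_{\delta\to 0}H_{\phi_\delta}(f)\le H_\phi(f)$ for a fixed $f\in\A_X$, and then deduce the convergence of the envelopes from it together with Lemma \ref{H_ineq}. For the first part, I would split $\phi_\delta = (\phi_1)_\delta - (\phi_2)_\delta$ (valid wherever the terms are not both infinite, which holds $\sigma$-a.e. on $\T$ after composing with $f$, since $\phi_2\circ f$ is subharmonic and hence $\not\equiv-\infty$ provided $f(\D)\not\subset\phi_2^{-1}(-\infty)$; the degenerate case is handled separately since then $H_\phi(f)=-\infty$ anyway). Then $H_{\phi_\delta}(f)=H_{(\phi_1)_\delta}(f)-H_{(\phi_2)_\delta}(f)$. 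For the plurisubharmonic part, $(\phi_2)_\delta\searrow\phi_2$ monotonically, so $H_{(\phi_2)_\delta}(f)=\int_\T(\phi_2)_\delta\circ f\,d\sigma\searrow\int_\T\phi_2\circ f\,d\sigma=H_{\phi_2}(f)$ by monotone convergence (the sequence is decreasing in $\delta$ and bounded below near any point by integrability). For the upper semicontinuous part $\phi_1$, one does not have monotonicity, but $\phi_1$ is $L^1_{\text{loc}}$ and bounded above on compact sets, so along the compact set $f(\overline\D)$ one has a uniform upper bound; combining Fatou's lemma (applied to the upper bound minus $(\phi_1)_\delta\circ f$) with the fact that $\limsup_{\delta\to0}(\phi_1)_\delta(z)\le\phi_1(z)$ pointwise (a standard property of mollification of usc functions) gives $\limsup_{\delta\to0}H_{(\phi_1)_\delta}(f)\le H_{\phi_1}(f)$. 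To get an actual limit rather than just a limsup, I would use the observation already recorded in the text that on $X_\delta$ the family $\phi_\delta$ is itself obtained by convolution, so $(\phi_\delta)_{\delta'}=\phi_{\delta+\delta'}$ up to the usual reparametrization, which forces the net $\delta\mapsto H_{\phi_\delta}(f)$ to be monotone decreasing; hence the limit exists and equals the limsup, which is $\le H_\phi(f)$.

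For the envelope statement, fix $x\in X$. The inequality $\liminf_{\delta\to0}EH_{\phi_\delta}(x)\ge EH_\phi(x)$ is immediate from Lemma \ref{H_ineq}: for small $\delta$ we have $x\in X_\delta$ and $EH_\phi(x)=EH_\phi|_{X_\delta}(x)\le EH_{\phi_\delta}(x)$, so in fact $EH_\phi(x)\le EH_{\phi_\delta}(x)$ for all sufficiently small $\delta$, giving $EH_\phi(x)\le\liminf_{\delta\to0}EH_{\phi_\delta}(x)$. For the reverse inequality, take any $f\in\A_X$ with $f(0)=x$. Then $f\in\A_{X_\delta}$ once $\delta$ is small enough that $f(\overline\D)\subset X_\delta$ (possible since $f(\overline\D)$ is a compact subset of the open set $X$), so $EH_{\phi_\delta}(x)\le H_{\phi_\delta}(f)$, and taking $\limsup_{\delta\to0}$ and using the first part of the lemma gives $\limsup_{\delta\to0}EH_{\phi_\delta}(x)\le H_\phi(f)$. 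Taking the infimum over all such $f$ yields $\limsup_{\delta\to0}EH_{\phi_\delta}(x)\le EH_\phi(x)$. Combining the two inequalities gives $\lim_{\delta\to0}EH_{\phi_\delta}(x)=EH_\phi(x)$.

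The main obstacle is the upper semicontinuous part: unlike the psh part, $(\phi_1)_\delta$ does not decrease monotonically to $\phi_1$, so the pointwise inequality $\lim_{\delta\to0}H_{\phi_\delta}(f)\le H_\phi(f)$ requires care. The key facts that make it work are that $\limsup_{\delta\to0}(\phi_1)_\delta\le\phi_1$ pointwise for usc $\phi_1$, that $\phi_1$ is locally bounded above (so Fatou applies to a dominated-above family along the compact $f(\overline\D)$), and — to upgrade $\limsup$ to a genuine limit — the semigroup-type relation $(\phi_\delta)_{\delta'}\le\phi_{\delta+\delta'}$ that makes the relevant net monotone. Once the pointwise statement is in hand, the envelope statement is a soft squeeze argument using Lemma \ref{H_ineq} in one direction and the stability of $\A_X$ under shrinking $X$ slightly in the other.
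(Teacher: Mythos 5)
Your treatment of the main case ($f(\D)\not\subset\phi_2^{-1}(-\infty)$) is sound and runs essentially parallel to the paper's: the paper bounds $\phi_{1,\delta}(x)\le\sup_{B(x,\delta)}\phi_1$ and invokes monotone convergence of that decreasing family, while you use $\limsup_{\delta\to0}\phi_{1,\delta}\le\phi_1$ together with a reverse Fatou argument under the local upper bound coming from upper semicontinuity; these are interchangeable. The deduction of the envelope statement from the pointwise bound via Lemma \ref{H_ineq} is also exactly the paper's (very terse) argument.

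There are, however, two problems, one of which is a genuine gap. First, the degenerate case $f(\D)\subset\phi_2^{-1}(-\infty)$ cannot be dismissed. You claim $H_\phi(f)=-\infty$ there, but $\phi$ is \emph{defined} on $\phi_2^{-1}(-\infty)$ as $\limsup_{\phi_2^{-1}(-\infty)\not\ni y\to x}\bigl(\phi_1(y)-\phi_2(y)\bigr)$, which is typically $+\infty$ rather than $-\infty$: take $\phi_1=0$, $\phi_2=\log|z_1|$ on $\C^2$ and $f(t)=(0,t)$, so that $\phi\circ f\equiv+\infty$. Even in situations where $H_\phi(f)$ did equal $-\infty$, the asserted inequality $\lim_{\delta\to0}H_{\phi_\delta}(f)\le H_\phi(f)$ would then be the \emph{strongest} possible claim and would still need proof, not dismissal. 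The paper handles this case by a separate estimate, $H_{\phi_\delta}(f)\le\int_\T\sup_{B(f(t),\delta)}\phi\,d\sigma(t)$, whose integrand decreases as $\delta\to0$ precisely to the limsup defining $\phi(f(t))$; note you cannot instead appeal to Lemma \ref{lemma_gooddiscs} to avoid such discs, since that lemma is itself proved using the present one. The omission matters for the envelope identity at points $x\in\phi_2^{-1}(-\infty)$, where the infimum may be approached by degenerate discs. Second, your upgrade from $\limsup$ to an actual limit rests on a semigroup identity $(\phi_\delta)_{\delta'}=\phi_{\delta+\delta'}$ which is false for a general compactly supported mollifier, and even granting it, monotonicity of $\delta\mapsto H_{\phi_\delta}(f)$ would not follow, since convolution does not monotonically decrease $H$ for a merely usc weight. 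This second point is largely cosmetic — only the $\limsup$ bound is used downstream, and the paper's own proof establishes no more — but the missing degenerate case is a real gap.
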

\begin{proof}
	Let $f \in \A_X$, $\beta > H_\phi(f)$, and
	$\delta_0$ be such that $f(\overline \D) \in X_{\delta_0}$, and
	assume $\phi_2 \circ f \neq -\infty$. Since $\phi_2$ is plurisubharmonic we know
	that $\phi_{2,\delta} \geq \phi_2$ on $X_\delta$ for all $\delta < \delta_0$, so
	$$
		H_{\phi_\delta}(f) = 
		H_{\phi_{1,\delta}}(f) - H_{\phi_{2,\delta}}(f) \leq 
		\int_\T \sup_{B(f(t),\delta)} \phi_1\, d\sigma(t) - H_{\phi_2}(f).
	$$
	The upper semicontinuity of $\phi_1$ implies that the integrand on the
	right side is bounded above on $\T$ and also that it decreases to
	$\phi_1(f(t))$ when $\delta \to 0$.
	It follows from monotone convergence that the integral tends to
	$\int_\T \phi_1 \circ f\, d\sigma = H_{\phi_1}(f)$ when $\delta \to 0$, that is the right 
	side tends to $H_\phi(f) < \beta$.
	We can therefore find
	$\delta_1 \leq \delta_0$ such that  
	$$
		\int_\T \sup_{B(f(t),\delta)} \phi_1 \circ f\, d\sigma - H_{\phi_2}(f)< \beta,
	\quad\text{ for every } \delta < \delta_1.
	$$
	
	However, if $\phi_2 \circ f = -\infty$, then by monotone convergence
	\begin{eqnarray*}
		H_{\phi_\delta}(f) &=& \int_\T \int_\mathbb B \phi(f(t) - \delta y) \rho(y)\, d\lambda(y)\, d\sigma(t)\\
		&\leq& \int_\T \sup_{B(f(t),\delta)} \phi\, d\sigma(t) 
		= \int_\T \sup_{B(f(t),\delta)\setminus \phi_2^{-1}(-\infty)} (\phi_1 - \phi_2)\, d\sigma(t) \\
		&\xrightarrow[\delta \to 0]{}& \int_\T \limsup_{y \to f(t)} \big(\phi_1(y) - \phi_2(y)\big)\, d\sigma(t) 
		= H_\phi(f).
	\end{eqnarray*}

	This 
	along with the fact that
	$EH_\phi(x) \leq  EH_{\phi_\delta}(x)$ by Lemma \ref{H_ineq} 
	shows that
	$\lim_{\delta \to 0} EH_{\phi_\delta} = EH_{\phi}$.
\end{proof}

\begin{lemma}
	If $\phi=\phi_1-\phi_2$ as before, $f \in \A_X$, $f(\D) \subset 
	\phi_2^{-1}(-\infty)$, and $\epsilon >0$, 
	then there is a disc $g\in \A_X$ such that $g(\D) \not\subset \phi_2^{-1}(-\infty)$
	and $H_\phi(g) < H_\phi(f) + \epsilon$.
	\label{lemma_gooddiscs}
\end{lemma}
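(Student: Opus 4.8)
The plan is to perturb the disc $f$ slightly so that its image is no longer contained in the pluripolar set $\phi_2^{-1}(-\infty)$, while controlling the change in the Poisson functional $H_\phi$. The key point is that $\phi_2^{-1}(-\infty)$ is a pluripolar (hence "thin") set, so generic small perturbations of $f$ will escape it, whereas $\phi_1$ is upper semicontinuous and $\phi_2$ is plurisubharmonic, which gives one-sided control on how $H_{\phi_1}$ and $H_{\phi_2}$ behave under perturbation.

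\medskip
\noindent\textbf{Step 1 (reduce to a local / coordinate situation and build a family of competitors).} Since the statement is about a single disc $f$, I would first note that $f(\overline{\D})$ is a compact subset of $X$, so after covering it by finitely many coordinate charts one can work as if $X$ is (a subset of) $\C^n$; alternatively, one can translate/perturb using an exponential-type map on a manifold. Then consider the family $g_a(t) = f(t) + a$ for small $a \in \C^n$ (in local coordinates), or more robustly a family $g_a$ with $g_a(0)$ possibly moving — but here we need $g(\D)\not\subset \phi_2^{-1}(-\infty)$ and only $H_\phi(g) < H_\phi(f)+\epsilon$, with no constraint on $g(0)$, so simple translations suffice. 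Actually, to keep $g(0)$ fixed is not required by the statement, which is convenient. So the competitors are $g_a = f + a$ for $a$ in a small ball $B(0,r)$.

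\medskip
\noindent\textbf{Step 2 (escape the pluripolar set for a.e.\ parameter).} The set $E = \phi_2^{-1}(-\infty)$ is pluripolar, hence has Lebesgue measure zero, and moreover $\{(t,a) \in \T\times B(0,r) : f(t)+a \in E\}$ has measure zero by Fubini, since for each fixed $t$ the slice $\{a : f(t)+a\in E\}$ has measure zero. Therefore for a.e.\ $a$ the disc $g_a$ satisfies $g_a(\T) \not\subset E$, and in fact (using that $\phi_2 \circ g_a$ is subharmonic and not identically $-\infty$ once $\phi_2\circ g_a$ is finite at one boundary point, or rather: once $\phi_2\circ g_a \not\equiv -\infty$ on $\T$ it is $> -\infty$ a.e.\ on $\D$) we get $g_a(\D) \not\subset E$. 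So the ``generic disc escapes'' part is essentially measure theory plus subharmonicity.

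\medskip
\noindent\textbf{Step 3 (continuity/upper semicontinuity of the functional in the parameter).} I would show $\limsup_{a\to 0} H_\phi(g_a) \le H_\phi(f)$, or at least that $H_\phi(g_a) < H_\phi(f)+\epsilon$ for a set of $a$ of positive measure accumulating at $0$; intersecting with the full-measure set from Step 2 then produces the desired $g$. For the $\phi_1$ part: $\phi_1$ is usc and locally $L^1$, and $\int_\T \phi_1(f(t)+a)\,d\sigma(t)$ behaves well — using a sup-convolution or Fatou-type argument, $\limsup_{a\to0}\int_\T \phi_1(f(t)+a)\,d\sigma \le \int_\T \phi_1(f(t))\,d\sigma$ when $\phi_1$ is usc (integrate the pointwise $\limsup$ bound against $\sigma$, dominated above by an integrable $\sup$-function as in the proof of Lemma~\ref{EH_lim}). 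For the $\phi_2$ part: $\phi_2$ is psh, so $a \mapsto \int_\T \phi_2(f(t)+a)\,d\sigma(t)$ is psh in $a$ hence usc, so $\liminf_{a\to0} \int_\T \phi_2(f(t)+a)\,d\sigma \ge \int_\T \phi_2(f(t))\,d\sigma$ (indeed by Fatou, since $\phi_2$ is usc and bounded above locally, but the mean over $\T$ could be $-\infty$; if $H_{\phi_2}(f)=-\infty$ one argues slightly differently using the $\limsup$-definition of $\phi$). Subtracting gives $\limsup_{a\to0} H_\phi(g_a) \le H_{\phi_1}(f) - H_{\phi_2}(f) = H_\phi(f)$ in the case $\phi_2\circ f \not\equiv -\infty$; in the case $\phi_2\circ f \equiv -\infty$ one uses the definition of $\phi$ via the $\limsup$ over $y\to f(t)$ avoiding $E$, exactly paralleling the last displayed computation in the proof of Lemma~\ref{EH_lim}, to see that $\int_\T \sup_{B(f(t),|a|)\setminus E}(\phi_1-\phi_2)\,d\sigma \to H_\phi(f)$.

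\medskip
\noindent\textbf{Main obstacle.} The delicate point is the interaction between Steps 2 and 3 in the degenerate case $H_{\phi_2}(f) = -\infty$ (in particular $f(\D)\subset E$, which is precisely the hypothesis): one must simultaneously arrange that $g_a$ escapes $E$ \emph{and} that $H_{\phi_2}(g_a)$ does not drop to $-\infty$ (so that $H_\phi(g_a)$ is genuinely finite and close to $H_\phi(f)$, which itself is a $\limsup$ of finite values). This is handled by the $\sup$-over-$B(f(t),|a|)\setminus E$ upper bound combined with monotone convergence, exactly as in Lemma~\ref{EH_lim}: that estimate doesn't require $g_a$ itself to avoid $E$, only that the enveloping $\sup$-integral converges, and then a.e.\ choice of $a$ from the full-measure set of Step 2 makes $g_a(\D)\not\subset E$ without spoiling the bound. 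Assembling these, pick $a$ in the (positive-measure, accumulating at $0$) set where both conditions hold and set $g = g_a$.
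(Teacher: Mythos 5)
Your proposal is correct and follows essentially the same route as the paper: choose $\delta$ with $H_{\phi_\delta}(f)\le H_\phi(f)+\epsilon$ via Lemma~\ref{EH_lim}, note that the set of perturbation parameters for which the perturbed disc stays inside the Lebesgue-null set $\phi_2^{-1}(-\infty)$ is itself null, and pick a parameter at which the functional does not exceed its average $H_{\phi_\delta}(f)$ with respect to $\rho\,d\lambda$. The only difference is that the paper perturbs by $g(t)=f(t)-\delta t y_0$ rather than by a constant translation, which additionally keeps $g(0)=f(0)$ --- not required by the literal statement, but it is the form in which the lemma is meant to be applied to the envelope.
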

\begin{proof}
	By Lemma \ref{EH_lim} we can find $\delta >0$ such that $H_{\phi_\delta}(f) \leq H_\phi(f) + \epsilon$.
	Let $\tilde B = \big\{ y \in \mathbb B ;\{\phi(f(t)-\delta t y) ; t \in \D \} 
	\not\subset \phi_2^{-1}(-\infty) \big\}$, then $\mathbb B \setminus \tilde B$ is a zero set 
	and as before there is $y_0 \in \tilde B$ such that
	$$
		\int_\T \phi(f(t) - \delta t y_0)\, d\sigma(t)  
		\leq \int_\T \int_{\tilde B} \phi(f(t) - \delta t y) \rho(y)\, d\lambda(y)\, d\sigma(t)
		=  H_{\phi_\delta}(f).
	$$
	We define  $g \in \A_X$ by $g(t) = f(t) - \delta t y_0$. Then 
	$H_\phi(g) \leq H_\phi(f) + \epsilon$.
\end{proof}

\begin{lemma}
	Let $\phi$ be usc on a complex manifold $X$ and $F \in \O(D_r \times Y, X)$, where
	$r>1$ and $Y$ is a complex manifold, then $y \mapsto H_\phi(F(\cdot,y))$ is usc. 
	Furthermore, if $\phi$ is psh then this function is also psh.
	\label{lemma_composition}
\end{lemma}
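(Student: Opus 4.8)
The plan is to handle the two assertions separately, in each case transferring the relevant property of $\phi\circ F$ on $D_r\times Y$ to the integrated function on $Y$. Both upper semicontinuity and plurisubharmonicity being local, I would fix $y_0\in Y$, choose a relatively compact open neighbourhood $V$ of $y_0$, and set $K=F(\overline\D\times\overline V)$, which is compact in $X$. Then $M:=\sup_K\phi<+\infty$, and $(t,y)\mapsto\phi(F(t,y))$ is Borel measurable (being upper semicontinuous) and bounded above by $M$ on $\overline\D\times\overline V$, so $H_\phi(F(\cdot,y))=\int_\T\phi\circ F(\cdot,y)\,d\sigma$ is well defined in $[-\infty,M]$ for every $y\in V$. (It is here, and in making $F(\cdot,y)$ a genuine element of $\A_X$, that $r>1$ is used.)

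For the upper semicontinuity I would take a sequence $y_j\to y_0$ in $V$. For each fixed $t\in\T$, continuity of $F$ in the second variable together with upper semicontinuity of $\phi$ gives $\limsup_j\phi(F(t,y_j))\le\phi(F(t,y_0))$, hence $\liminf_j\bigl(M-\phi(F(t,y_j))\bigr)\ge M-\phi(F(t,y_0))\ge 0$. Applying Fatou's lemma to the nonnegative measurable functions $t\mapsto M-\phi(F(t,y_j))$ on $(\T,\sigma)$ then yields $\limsup_j H_\phi(F(\cdot,y_j))\le H_\phi(F(\cdot,y_0))$, which is the desired upper semicontinuity of $y\mapsto H_\phi(F(\cdot,y))$ at $y_0$.

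For the second assertion, assume in addition that $\phi$ is plurisubharmonic, and keep $y_0$, $V$, $K$, $M$ as above; write $v:=H_\phi(F(\cdot,\newcdot))$, already known to be usc and locally bounded above, so that it suffices to verify the sub-mean-value inequality. Passing to a holomorphic chart I may take $V\subset\C^m$ open; fix $a\in V$ and $b\in\C^m$ and pick $r_0>0$ with $a+\zeta b\in V$ for $|\zeta|\le r_0$. For each fixed $t\in\T$ the map $\zeta\mapsto F(t,a+\zeta b)$ is holomorphic on a neighbourhood of $\overline{D_{r_0}}$, so $\zeta\mapsto\phi\bigl(F(t,a+\zeta b)\bigr)$ is subharmonic there and satisfies $\phi(F(t,a))\le\tfrac1{2\pi}\int_0^{2\pi}\phi\bigl(F(t,a+re^{i\theta}b)\bigr)\,d\theta$ for $0\le r<r_0$. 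Integrating this in $t$ over $\T$ and interchanging the $t$-integral with the $\theta$-integral — legitimate by Tonelli's theorem applied to $(t,\theta)\mapsto M-\phi\bigl(F(t,a+re^{i\theta}b)\bigr)$, which is nonnegative, Borel, and bounded above on $\T\times[0,2\pi]$ — gives $v(a)\le\tfrac1{2\pi}\int_0^{2\pi}v\bigl(a+re^{i\theta}b\bigr)\,d\theta$ for $0\le r<r_0$. Since $v$ is usc, locally bounded above, and satisfies this sub-mean-value inequality along every complex line, $v$ is plurisubharmonic on $V$, hence on $Y$ (on a component, if any, where $F$ maps $\overline\D$ times that component into $\phi^{-1}(-\infty)$, one has $v\equiv-\infty$, which is plurisubharmonic by convention).

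I do not expect a genuine obstacle here; the proof is essentially an exchange-of-limits argument, and the only points demanding care are measure-theoretic: one must secure the local upper bound $M$ for $\phi\circ F$ via compactness of $F(\overline\D\times\overline V)$, both so that $H_\phi(F(\cdot,y))$ is meaningful and so that Fatou's lemma (for the first part) and Tonelli's theorem (for the second) may be applied after subtracting $\phi\circ F$ from the constant $M$ to get nonnegative functions, together with the observation that $\phi\circ F$ is Borel measurable because $\phi$ is upper semicontinuous.
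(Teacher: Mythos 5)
Your proof is correct and follows essentially the same route as the paper: Fatou's lemma (after securing a local upper bound by compactness) for the upper semicontinuity, and an exchange of the order of integration combined with subharmonicity of $s\mapsto\phi(F(t,s))$ along holomorphic discs for the sub-mean-value property. The only differences are cosmetic — you verify the sub-mean-value inequality on complex lines in a chart where the paper uses arbitrary analytic discs $h\in\A_Y$, and you spell out the Tonelli/Fatou justifications that the paper leaves implicit.
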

\begin{proof}
	Fix a point $x_0 \in Y$ and a compact neigbourhood $V$ of $x_0$. 
	The function $\phi \circ F$ is usc and therefore bounded above on 
	$\T \times V$
	so by Fatou's lemma
	\begin{eqnarray*}
		\limsup_{x \to x_0} H_\phi(F(\cdot,x)) &\leq& \int_\T \limsup_{x\to x_0} \phi(F(t,x))\, d\sigma(t)\\
		= \int_\T \phi(F(t,x_0))\, d\sigma(t) &=& H_\phi(F(\cdot,x_0)),
	\end{eqnarray*}
	which shows that the function is usc.

	Assume $\phi$ is psh and let $h\in A_Y$. Then
	\begin{eqnarray*}
		\int_\T H_\phi(F(\cdot,h(s)))\, d\sigma(s) &=& \int_\T \int_\T \phi(F(t,h(s)))\,d\sigma(t)\,d\sigma(s)\\
		&=& \int_\T \int_\T \phi(F(t,h(s)))\,d\sigma(s)\,d\sigma(t) \\ 
		&\geq& \int_\T \phi(F(t,h(0)))\,d\sigma(t)\\
		&=& H_\phi(F(\cdot,h(0)),
	\end{eqnarray*}
	because for fixed $t$, the function $s\mapsto \phi(F(t,h(s)))$ is subharmonic.
\end{proof}

\begin{prooftx}{Proof of Theorem \ref{th_main} for an open subset $X$ of $\C^n$ and $\omega=0$}
	We start by showing that the envelope is usc. 
	
	Since $\phi_\delta$ is continuous we have by Poletsky's result 
	\cite{Pol:1993} that $EH_{\phi_\delta}$ is psh, in particular it is
	usc and does not take the value $+\infty$.

       Now, assume $x \in X$ and let $\delta >0$ be so small that $x \in X_\delta$.
       By the fact that $EH_{\phi_\delta}<+\infty$ and
       $EH_\phi|_{X_\delta} \leq EH_{\phi_\delta}$
       we see that $EH_\phi$ is finite.

       For every $\beta > EH_\phi(x)$, we let $\delta>0$ be such that
       $EH_{\phi_\delta}(x) < \beta$. Since $EH_{\phi_\delta}$ is upper
       semicontinuous there is a neighbourhood $V \subset X_\delta$ of $x$
       where $EH_{\phi_\delta} < \beta$.
       By Lemma \ref{H_ineq} we see that
       $EH_\phi < \beta$ on $V$, which
       shows that $EH_\phi$ is upper semicontinuous.

	Now we only have to show that $EH_\phi$ 
	satisfies the sub-average property.

	Fix a point $x \in X$, an analytic disc $h\in \A_X$, $h(0)=x$ 
	and find $\delta_0$ such that
	$h(\overline \D) \subset X_{\delta_0}$.
	Note that the function $EH_{\phi_\delta}$ is psh by Poletsky's result \cite{Pol:1993} since $\phi_\delta$ is
	continuous. 
	Then Lemma \ref{H_ineq} and the plurisubharmonicity of $EH_{\phi_\delta}$ 
	gives that for every $\delta < \delta_0$, 
	$$
		EH_\phi(x) \leq EH_{\phi_\delta}(x) \leq 
		\int_\T EH_{\phi_\delta}\circ h\, d\sigma.
	$$
	When $\delta \to 0$ Lebesgue's theorem along with Lemma \ref{EH_lim} implies that
	$EH_\phi(x) \leq \int_\T EH_\phi \circ h\, d\sigma$.

	Since $EH_\phi(x) \leq H_\phi(x) = \phi(x)$, where $H_\phi(x)$ is the functional $H_\phi$ evaluated at the constant
	disc $t \mapsto x$, we see that $EH_\phi \leq \sup \F_\phi$.

	Also, if $u \in \F_\phi$
	and $f \in \A_X$, then
	$$
		u(f(0)) \leq \int_\T u \circ f\, d\sigma \leq \int_\T \phi \circ f\, d\sigma = H_\phi(f).
	$$
	Taking supremum over $u \in \F_\phi$ and infimum over $f \in \A_X$ we get the opposite inequality,
	$\sup \mathcal F_\phi \leq EH_\phi$, and therefore an equality.
\end{prooftx}

For the case when $X$ is a manifold we need the following theorem of L\'arusson and Sigurdsson
(Theorem 1.2 in \cite{LarSig:2003}). 

\begin{theorem}
	A disc functional $H$ on a complex manifold $X$ has a plurisubharmonic envelope 
	if it satisfies the following three conditions.
	\begin{itemize}
		\item[(i)] The envelope $E\Phi^*H$ is plurisubharmonic for every 
			holomorphic submersion $\Phi$ from
			a domain of holomorphy in affine space into $X$, where 
			the pull-back $\Phi^*H$ is defined as $\Phi^*H(f)=H(\Phi \circ f)$
			for a closed disc $f$ in the domain of $\Phi$.
		\item[(ii)] There is an open cover of $X$ by subsets $U$ with a pluripolar 
			subset $Z \subset U$ such that
			for every $h \in \A_U$ with $h(\overline \D) \not\subset Z$, the 
			function $w\mapsto H(h(w))$
			is dominated by an integrable function on $\T$.
		\item[(iii)] If $h \in \A_X$, $w\in\T$, and $\epsilon >0$, then $w$ has a 
			neighbourhood $U$ in $\C$ such that
			for every sufficiently small closed arc $J$ in $\T$ 
			containing $w$ there is a holomorphic map
			$F:D_r \times U \to X$, $r>1$, such that $F(0,\cdot) = h|_U$ and 
			\begin{equation}
				\frac{1}{\sigma(J)} \underline{\int_J} H(F(\cdot,t))\, d\sigma(t) 
				\leq EH(h(w)) + \epsilon,
				\label{}
			\end{equation}
			where the integral on the left hand side is the lower integral, i.e.~the 
			supremum of the integrals of all integrable Borel functions dominated by the integrand.
	\end{itemize}
	\label{th_larsig}
\end{theorem}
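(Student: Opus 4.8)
\begin{prooftx}{Proof sketch}
	The plan is to use that plurisubharmonicity is a local property that is detected by restriction to analytic discs: once $EH$ is known to be upper semicontinuous and locally bounded above, it is enough to verify the sub-mean value inequality $EH(h(0))\le\int_\T EH\circ h\,d\sigma$ for every $h\in\A_X$. I would use condition (i) as the engine for everything else: around any point of $X$ one fixes a holomorphic submersion $\Phi$ from a domain of holomorphy $\Omega$ in affine space onto a neighbourhood (for instance a chart, or a holomorphic retraction of a tubular neighbourhood of a local Stein piece of $X$), and then $E\Phi^*H$ is plurisubharmonic on $\Omega$. The point of (i) is not that it directly gives $EH$ locally, since $E\Phi^*H\circ s\ge EH$ for a section $s$ but equality can fail because discs in $X$ need not stay in the image of $\Phi$; rather, $E\Phi^*H$ being psh is exactly the disc-gluing mechanism available in affine space (ultimately Poletsky's construction, \cite{Pol:1993,Ros:2003}), which I will transport to $X$ by lifting.

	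For the sub-mean value inequality fix $h\in\A_X$ with $h(0)=x$ and $\epsilon>0$. For each $w\in\T$ condition (iii) furnishes a neighbourhood $U_w$ of $w$ in $\C$ and, for every sufficiently small closed arc $J\ni w$, a holomorphic family $F_w\colon D_r\times U_w\to X$, $r>1$, with $F_w(0,\newcdot)=h|_{U_w}$ and $\tfrac{1}{\sigma(J)}\,\underline{\int_J}H(F_w(\newcdot,t))\,d\sigma(t)\le EH(h(w))+\epsilon$. By compactness of $\T$ I would pass to a finite subcover and refine it to a partition of $\T$ into small arcs $J_1,\dots,J_k$ with $J_j\subset U_{w_j}$, each $J_j$ small enough that the displayed estimate holds for $F_j:=F_{w_j}$ and fine enough that $\sum_j\sigma(J_j)\bigl(EH(h(w_j))+\epsilon\bigr)\le\int_\T EH\circ h\,d\sigma+\epsilon$; the last step uses upper semicontinuity of $EH\circ h$ to compare this Riemann-type sum with the integral.

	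The heart of the argument is then to assemble the local families $F_1,\dots,F_k$ into a single closed analytic disc $g\in\A_X$ with $g(0)=x$ whose boundary over each $J_j$ is built from the discs $F_j(\newcdot,t)$, $t\in J_j$, so that $H(g)\le\sum_j\int_{J_j}H(F_j(\newcdot,t))\,d\sigma(t)+\epsilon$. I would carry this out by lifting the configuration through submersions $\Phi$ as in the first step into domains of holomorphy, performing the gluing there, where condition (i) provides exactly the needed disc construction, and then projecting the glued disc back down. Condition (ii) enters here to guarantee that the maps $t\mapsto H(F_j(\newcdot,t))$ are dominated by integrable functions off a pluripolar set, so that the lower integrals of (iii) become genuine integrals that add correctly across the partition. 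Chaining the estimates gives $EH(x)\le H(g)\le\int_\T EH\circ h\,d\sigma+3\epsilon$, and letting $\epsilon\to0$ finishes. The upper semicontinuity and local boundedness above of $EH$ I would obtain by a parallel localization of the infimum defining $EH$, again using (iii) near boundary points and (i) to control the resulting families; these are genuinely part of the argument rather than free.

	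The main obstacle is precisely this gluing/lifting step: turning finitely many local holomorphic families, each controlling only the behaviour of $h$ near one boundary point, into one honest holomorphic map on a neighbourhood of $\overline\D$ with center $x$ and functional value at most the sum of the local bounds up to $\epsilon$. This requires the submersion reduction to move the construction into affine space (where the families may sweep out regions covered by several charts, so the lifting must be organized carefully), a conformal reparametrisation so that the arcs $J_j$ are traversed in the prescribed cyclic order with controlled boundary distortion, and uniform estimates keeping the bookkeeping consistent throughout — which is exactly why the statement is phrased with the lower integral in (iii) and the domination hypothesis (ii), and why it reduces, via (i), to the already-known affine-space case. The full details are in \cite{LarSig:2003}.
\end{prooftx}
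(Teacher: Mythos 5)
The paper does not actually prove this statement: Theorem \ref{th_larsig} is quoted verbatim as Theorem 1.2 of L\'arusson and Sigurdsson \cite{LarSig:2003}, so there is no in-paper proof to compare against. Measured as a standalone proof, your text is an outline rather than an argument: you correctly identify the architecture (localize via (iii) to finitely many arcs $J_j$ with families $F_j$, use (ii) to control the complementary part of $\T$, and invoke (i) to do the actual disc construction in affine space), but the step you yourself call ``the heart of the argument'' --- producing a single $g\in\A_X$ with $g(0)=x$ and $H(g)$ bounded by the sum of the local contributions --- is deferred entirely to the reference. That step is the entire content of the theorem, so the proposal cannot be accepted as a proof.

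Two more specific points. First, the mechanism is not a conformal welding of the $F_j$'s into one disc, and the ``reparametrisation so that the arcs are traversed in the prescribed cyclic order'' you worry about never occurs. The actual construction (visible in this paper's own Reduction Theorem \ref{th_red}, which adapts it) lifts everything to $\C^2\times X$: one takes a Stein neighbourhood of the union of the graph of $h$ and the graphs of the $F_j$ over the $J_j$ (Siu's theorem), applies condition (i) to the projection $\Phi$ from that neighbourhood to get the sub-mean value inequality for $E\Phi^*H$ along the lift $\tilde h(w)=(w,0,h(w))$, bounds $E\Phi^*H(\tilde h(w))\le H(F_j(\cdot,w))$ for $w\in J_j$ via the vertical discs $z\mapsto(w,z,F_j(z,w))$ and by $H$ of constant discs on $\T\setminus\bigcup_jJ_j$ using (ii), and finally pushes down a near-optimal disc for $E\Phi^*H(\tilde h(0))$. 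All the gluing is absorbed into the plurisubharmonicity of $E\Phi^*H$ granted by (i); nothing is welded by hand. Second, your Riemann-sum comparison $\sum_j\sigma(J_j)\bigl(EH(h(w_j))+\epsilon\bigr)\le\int_\T EH\circ h\,d\sigma+\epsilon$ is delicate for a merely usc integrand chosen after the cover; the standard and cleaner route (used both in \cite{LarSig:2003} and in the proof of Theorem \ref{th_red} here) is to fix a continuous majorant $v\ge EH$ near $h(\T)$, prove $H(g)\le\int_\T v\circ h\,d\sigma+\epsilon$, and then take the infimum over $v$.
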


\begin{prooftx}{Proof of Theorem \ref{th_main} for a general complex manifold $X$ and $\omega=0$}
	We have to show that $H_\phi$ satisfies the three condition in Theorem \ref{th_larsig}. 
	Condition \textsl{(i)} follows from 
	the case above when $X \subset \C^n$ and condition \textsl{(ii)} if we take $U=X$ and $Z=\phi^{-1}(\{+\infty\})$.
	Then $H_\phi(h(w)) = \phi(h(w))$ which is integrable since $h(0) \notin Z$.
	
	To verify condition \textsl{(iii)}, let $h\in \A_X$, $w\in \T$ and $\beta > EH_\phi(h(w))$. 
	Then there is a disc $f \in \A_X$, $f(0)=h(w)$ such that $H_\phi(f) < \beta$. 
	Now look at the graph $\{ (t,f(t) )  \}$ of $f$ in $\C \times X$ and let $\pi$ denote the projection 
	from $\C \times X$ to $X$.
	As in the proof of Lemma 2.3 in \cite{LarSig:1998} 
	there is, by restricting the graph to a disc $D_r$, $r>1$, a bijection $\Phi$ from a neighbourhood of the 
	graph onto $\D^{n+1}$ such that $\Phi(t,f(t)) = (t,\overline 0)$. In order to clarify the notation we write
	$\overline 0$ for the zero vector in $\C^n$.
	
	If we define $\tilde \phi = \phi \circ \pi \circ \Phi^{-1}$, 
	then $H_\phi(f) = H_{\tilde \phi}((\cdot,\overline 0))$, where 
	$(\cdot,\overline 0)$ represents the analytic disc
	$t \mapsto (t,0,\ldots,0)$. The function $\tilde \phi$ is defined on an open subset of $\C^{n+1}$
	which enables us to smooth it using convolution as in the first part of this section. 

	By Lemma \ref{EH_lim}, there is $\delta \in ]0,1[$ such that 
	$H_{\tilde \phi_\delta}( (\cdot,\overline 0 ) ) < \beta$.
	Since $\tilde \phi_\delta$ is continuous, the function 
	$x \mapsto H_{\tilde \phi_\delta}( (\cdot,\overline 0) + x)$ is continuous.
	Then there is a neighbourhood $\tilde U$ of
	$0$ in $D_{1-\delta}^n$, such that 
	$H_{\tilde \phi_\delta}( (\cdot,\overline 0) + x) < \beta$ for  $x \in \tilde U$. 
	Let $J \subset \T$ be a closed arc such that $\tilde h(J) \subset \tilde U$, where $\tilde h(t) = \Phi(0,h(t))$.

	With the same argument as in the proof of 
	Lemma \ref{H_ineq}, we find $y_0 \in \mathbb B \subset \C^{n+1}$ such that
	\begin{eqnarray*}
		\beta &>& \frac{1}{\sigma(J)} \int_J H_{\tilde \phi_\delta}\big( (\cdot,0) + \tilde h(t) \big)\, d\sigma(t)\\
		&=& \frac{1}{\sigma(J)}\int_{\mathbb B} \Big( \int_J \int_\T \tilde \phi \big( (s,0) + h(t) - \delta s y \big)
		\, d\sigma(s)\, d\sigma(t) \Big) \rho(y) \, d\lambda(y) \\
		&\geq& \frac{1}{\sigma(J)}\int_J \int_\T \tilde \phi \big( (s,0) + \tilde h(t) - \delta s y_0 \big)
		\, d\sigma(s)\,d\sigma(t).
	\end{eqnarray*}
	We define the function $F \in \O(D_{r} \times U, X)$ by 
	$$
		F(s,t) = \pi \circ \Phi^{-1}( (s,0) + \Phi(0,h(t)) - \delta s y_0 )
	$$ 
	and the set $U = h^{-1}(\pi(\Phi^{-1}(\tilde U)))$.
	
	Then $\tilde \phi ((s,0) + \tilde h(t) -\delta s y_0) = \phi(F(s,t)$, and we conclude that
	$$
	 \beta > \frac{1}{\sigma(J)}\int_J \int_\T \phi( F(s,t) )\, d\sigma(s)\, d\sigma(t)
		= \frac{1}{\sigma(J)}\int_J H_\phi( F(\cdot,t))\,d\sigma(t).
	$$

\end{prooftx}

\section{Proof in the case of a global potential}

We now look at the case when $\omega = \omega_1 - \omega_2$ has a global potential,
and show how Theorem 1.1 then follows from the results in Section 3.
We first assume $\phi_2 = 0$, that is the weight $\phi = \phi_1$ is an $\omega_1$-usc function. 

The Poisson disc functional from Section 3 is obviously not appropriate here since
it fails to take into account the current $\omega$. The remedy is to look at the pullback of
$\omega$ by an analytic disc. If $f$ is an analytic disc we define a closed
$(1,1)$-current $f^*\omega$ on $\D$ in exactly the same way as in \cite{Mag:2010}.

Assume $f(0) \notin \sing(\omega)$ and let $\psi$ be a local potential of
$\omega$. We define $f^*\omega$ locally by $dd^c (\psi \circ f)$. Because the difference of 
two local potentials is pluriharmonic then this is independent of the choice of $\psi$, so it 
gives a definition of $f^*\omega$ on all of $\D$.
Note that $\psi \circ f$ is not identically $\pm \infty$ since $f(0) \notin \sing(\omega)$.

We could as well define the positive currents $f^*\omega_1$ and $f^*\omega_2$, using 
$\psi_1$ and $\psi_2$ respectively, and then define $f^*\omega = f^*\omega_1 - f^*\omega_2$. 
This gives the same result since $\psi \circ f = \psi_1 \circ f - \psi_2 \circ f$ almost 
everywhere.

It is also possible to look at $f^*\omega$ as a real measure on $\D$, and as 
before, we let $R_{f^*\omega}$ be its Riesz potential,
\begin{equation}\label{R_f}
	R_{f^*\omega}(z) = \int_\D G_\D(z,\cdot)\, d(f^*\omega),
\end{equation}
where $G_\D$ is the Green function for the unit disc, $G_\D(z,w) = 
\frac{1}{2\pi}\log\frac{|z-w|}{|1-z\overline w|}$.
Since $f$ is a closed analytic disc not lying in $\sing(\omega)$ it follows
that $f^*\omega$ is a Radon measure 
in a neighbourhood of the unit disc, therefore with finite mass on $\D$ and not 
identically $\pm \infty$.

It is important to note that if  we have a local potential $\psi$ defined in a neighbourhood of 
$\overline{f(\D)}$, then the Riesz
representation formula, Theorem 3.3.6 in \cite{Hor}, at the point 0 gives
\begin{equation}\label{riesz}
	\psi(f(0)) = R_{f^*\omega}(0) + \int_\T \psi \circ f\, d\sigma.
\end{equation}
 
Next we define the disc functional. We let $\phi$ be an $\omega_1$-usc function on  
$X$ and fix a point $x \in X\setminus \sing(\omega)$. Let $f\in \A_X$, $f(0) = x$ and let 
$u \in \F_{\omega,\phi}$, where $\F_{\omega,\phi} = \{ u \in \PSH(X,\omega) ; u\leq \phi \}$.
By Proposition \ref{equi}, $u \circ f$ is
$f^*\omega$-subharmonic on $\D$, and since the Riesz potential $R_{f^*\omega}$ is a global potential
for $f^*\omega$ on $\D$ we have, by the subaverage property of $u\circ f + R_{f^*\omega}$, that
$$
	u(f(0)) + R_{f^*\omega}(0) \leq \int_\T u \circ f\, d\sigma + \int_\T R_{f^*\omega}\, d\sigma.
$$
Since, $R_{f^*\omega} = 0$ on $\T$ and $u\leq \phi$, we conclude that
$$
	u(x) \leq  - R_{f^*\omega}(0) + \int_\T \phi\circ f\, d\sigma.
$$

The right hand side is independent of $u$ so we can define the functional $H_{\omega,\phi}:\A_X \to [-\infty,+\infty]$
by
$$
H_{\omega,\phi}(f) = -R_{f^*\omega}(0) + \int_\T \phi \circ f\, d\sigma.
$$

By taking the supremum on the left hand side over all $u\in \PSH(X,\omega)$, $u \leq\phi$, and
the infimum on the right hand side over all $f \in \A_X$, $f(0)=x$ we get the inequality
\begin{equation}
	\sup \F_{\omega,\phi} \leq EH_{\omega,\phi}, \qquad \text{on } X\setminus \sing(\omega).
	\label{fund_ineq}
\end{equation}
We wish to show that this is an equality. 
By applying $H_{\omega,\phi}$ to the constant discs in $X\setminus \sing(\omega)$ 
we see that the right hand side 
is not greater than $\phi$. If we show that $EH_{\omega,\phi}$ is $\omega$-psh then it is
in $\F_{\omega,\phi}$ and we have an equality above. 

\begin{lemma}
	If $f \in \A_X$ and $\psi = \psi_1 - \psi_2$ is a potential for $\omega$ in a 
	neighbourhood of $f(\overline \D)$ then
	$$
		H_{\omega,\phi}(f) + \psi(f(0)) = H_{\phi+\psi}(f).
	$$
	\label{le_globalpotential}
\end{lemma}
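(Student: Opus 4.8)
The plan is to read the identity off from the Riesz representation formula \eqref{riesz} together with the additivity of the Poisson functional, the only substantive point being the finiteness of the Riesz potential at the origin, which legitimizes a cancellation.

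Since $H_{\omega,\phi}(f)$ is defined, $f(0)\notin\sing(\omega)=\sing(\omega_1)\cup\sing(\omega_2)$, so the functions $\psi_1\circ f$ and $\psi_2\circ f$ are subharmonic on a neighbourhood of $\overline\D$ and not identically $-\infty$. Hence each $\psi_i(f(0))$ is a finite real number, and by the sub-mean value inequality $\int_\T\psi_i\circ f\,d\sigma$ lies between $\psi_i(f(0))$ and $\max_{\overline\D}(\psi_i\circ f)$, so it is finite as well. Applying the Riesz representation formula \eqref{riesz} to each of the subharmonic functions $\psi_i\circ f$, i.e.\ to the potential $\psi_i$ of the positive current $\omega_i$, gives
\[
	\psi_i(f(0)) = R_{f^*\omega_i}(0) + \int_\T \psi_i\circ f\,d\sigma, \qquad i=1,2,
\]
with every term finite; in particular $R_{f^*\omega}(0)=R_{f^*\omega_1}(0)-R_{f^*\omega_2}(0)$ is a finite real number, and subtracting the two identities yields
\[
	\psi(f(0)) = R_{f^*\omega}(0) + \int_\T \psi\circ f\,d\sigma .
\]

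Next I would note that $\phi+\psi=(\phi+\psi_1)-\psi_2$ is again of the type treated in Section 3: because $\phi$ is $\omega_1$-usc and $\psi_1$ is a plurisubharmonic potential for $\omega_1$, the sum $\phi+\psi_1$ extends to the upper semicontinuous $L^1_{\text{loc}}$ function $(\phi+\psi_1)^\dagger$, while $\psi_2$ is plurisubharmonic. Thus $H_{\phi+\psi}$ is defined, and since $\int_\T\psi\circ f\,d\sigma$ is finite the Poisson functional splits along $f$,
\[
	H_{\phi+\psi}(f) = \int_\T\phi\circ f\,d\sigma + \int_\T\psi\circ f\,d\sigma ,
\]
where one uses that $\sing(\omega)$ is pluripolar and $\psi_i\circ f$ subharmonic, so that $f^{-1}(\sing(\omega))\cap\T$ is $\sigma$-null and the equality $(\phi+\psi)\circ f=\phi\circ f+\psi\circ f$ holds $\sigma$-a.e.\ on $\T$. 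Combining the last two displays, the Riesz potentials cancel:
\[
	H_{\omega,\phi}(f)+\psi(f(0)) = -R_{f^*\omega}(0)+\int_\T\phi\circ f\,d\sigma + R_{f^*\omega}(0)+\int_\T\psi\circ f\,d\sigma = H_{\phi+\psi}(f).
\]

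The main obstacle, and really the only one, is the finiteness of $R_{f^*\omega}(0)$ needed for this cancellation to make sense: a priori $R_{f^*\omega_1}(0)$ and $R_{f^*\omega_2}(0)$ could both be $-\infty$. This is exactly where the hypothesis $f(0)\notin\sing(\omega)$, built into the definition of $H_{\omega,\phi}$, is used — it forces each $\psi_i\circ f$ to be a genuine subharmonic function rather than the constant $-\infty$, so that \eqref{riesz} applies to it with all terms finite. Everything else is bookkeeping about which sums are legitimately defined.
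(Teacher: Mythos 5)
Your proof is correct and follows essentially the same route as the paper: both apply the Riesz representation formula \eqref{riesz} to $\psi_1\circ f$ and $\psi_2\circ f$ separately, use linearity of the Riesz potential, and cancel $R_{f^*\omega}(0)$. Your additional care about the finiteness of $R_{f^*\omega}(0)$ and of $\int_\T\psi\circ f\,d\sigma$ is a worthwhile elaboration of a point the paper handles in the discussion preceding the lemma (where $f^*\omega$ is noted to be a Radon measure of finite mass), but it is not a different argument.
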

\begin{proof}
	By the linearity of $R_{f^*\omega}$ and 
	Riesz representation (\ref{riesz}) of $f^*\psi_1$ and $f^*\psi_2$ we get
	\begin{eqnarray*}
		H_{\omega,\phi}(f) + \psi(f(0)) &=& 
		-R_{f^*\omega}(0) + \int_\T \phi \circ f\, d\sigma +\psi(f(0))\\
		&=& -R_{f^*\omega}(0) + \int_\T \phi \circ f\, d\sigma 
		+ R_{f^*\omega}(0) + \int_\T (\psi_1-\psi_2)\circ f\, d\sigma\\
		&=& \int_\T (\phi + \psi_1 - \psi_2) \circ f\, d\sigma = H_{\phi+\psi}(f).
	\end{eqnarray*}
\end{proof}

\begin{prooftx}{Proof of Theorem \ref{th_main} in the case when $\omega_1$ and $\omega_2$ have
 global potentials and $\phi_2=0$}
	By Lemma \ref{le_globalpotential} for $x \in X\setminus \sing(\omega)$,
	$$
		EH_{\omega,\phi}(x) + \psi(x) = \inf\{H_{\omega,\phi}(f) + \psi(x) ; f\in \A_X, f(0)=x\}
		= EH_{\phi +\psi}(x).
	$$
	Since $\phi + \psi = (\phi +\psi_1) - \psi_2$ is the difference of an usc 
	function and a plurisubharmonic function,
	the result from Section 3 gives that $EH_{\phi+\psi}$ is psh and 
	equivalently $EH_{\omega,\psi}$ is $\omega$-psh.
\end{prooftx}

\section{Reduction to global potentials and end of proof}

The purpose of this section is to generalize the reduction theorem presented in
\cite{Mag:2010} and simplify the proof of it. Then we apply it to the result in Section 4 to
finish the proof of Theorem \ref{th_main}.

The proof of the Reduction Theorem here does not directly rely on the construction of a Stein manifold 
in $\C^4 \times X$, instead we use Lemma \ref{lemma_potential} below to define a local potential 
around the graphs of the appropriate discs in $\C^2 \times X$.

It should be pointed out that Theorem \ref{th_red} does not work specifically with the Poisson functional but
a general disc functional $H$. We will however apply the results here to the Poisson functional
from Section 4, so it is of no harm to think of it in the role of $H$.

If $H$ is a disc functional defined for discs $f \in \A_X$, with 
$f(\D) \not\subset \sing(\omega)$, then 
we define the envelope $EH$ of $H$ on $X\setminus \sing(\omega)$ by 
$$
	EH(x) = \inf\{ H(f) ; f \in \A_X, f(0) = x \}.
$$
We then extend $EH$ to a function on $X$ by 
\begin{equation}
	EH(x) = \limsup_{\sing(\omega) \not\ni y \to x} EH(y), \qquad \text{for } x \in \sing(\omega), 
	\label{EH_sing}
\end{equation}
in accordance with Definition \ref{def_usc} of $\omega$-usc functions.

If $\Phi:Y \to X$ is a holomorphic function and $H$ a disc functional on $\A_X$, 
then we can define the pullback 
$\Phi^*H$ of $H$ by $\Phi^*H(f) = H(\Phi\circ f)$, for $f \in \A_Y$. 
Every disc $f \in \A_Y$ gives a push-forward $\Phi\circ f \in A_X$ and 
it is easy to see that
\begin{equation}
	\Phi^*EH \leq E\Phi^*H,
	\label{EPhiH}
 \end{equation}
where $\Phi^*EH = EH \circ \Phi$ is the pullback of $EH$. 
We have an equality in (\ref{EPhiH}) if every disc $f \in \A_X$ has a lifting 
$\tilde f \in \A_Y$, $f = \Phi \circ \tilde f$.

If $\Phi:Y \to X$ is a submersion the currents $\Phi^*\omega_1$ and $\Phi^*\omega_2$
are well-defined on $Y$. 
The core in showing the $\omega$-plurisubharmonicity of $EH$ is the following lemma. 
It produces 
a local potential of the currents $\Phi^*\omega_1$ and $\Phi^*\omega_2$ 
in a neighbourhood of the graphs of the discs from condition \textsl{(iii)} 
in Theorem \ref{th_red} below.
\begin{lemma}
	Let $X$ be a complex manifold and $\tilde \omega$ a positive closed $(1,1)$-current on $\C^2 \times X$.
	Assume $h \in \O(D_r,X)$, $r>1$ and for $j=1,\ldots,m$ assume $J_j \subset \T$ are 
	disjoint arcs and $U_j \subset D_r$ are pairwise disjoint open discs containing $J_j$.
	Furthermore, assume there are functions $F_j \in \O(D_s \times U_j,X)$, $s>1$, for $j=1,\ldots,m$, such that
	$F_j(0,w) = h(w)$, $w\in U_j$.

	If $K_0 = \{ (w,0,h(w)) ; w \in \overline \D \}$ and $K_j = \{ (w,z,F_j(z,w)) ; z \in \overline \D, w \in J_j \}$
	then there is an open neighbourhood 
	of $K = \cup_{j=0}^m K_j$ where $\tilde \omega$ has a global potential $\psi$.
	\label{lemma_potential}
\end{lemma}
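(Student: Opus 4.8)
The key observation is that $K = \bigcup_{j=0}^m K_j$ is a compact set in $\C^2 \times X$ which is, up to a biholomorphism, arranged so that we can reduce to a cohomologically trivial situation and invoke Proposition \ref{global_potential}. The plan is to exhibit an open neighbourhood $\Omega$ of $K$ that is biholomorphic (or at least $\O(0,1)$- and $H^2$-trivially embedded) to an open subset of $\C^2 \times X$ on which we can kill the relevant cohomology, and then apply Proposition \ref{global_potential} (or rather its proof) to $\tilde\omega|_\Omega$. Concretely, the first step is to understand the geometry of $K$: the piece $K_0$ is the graph of the disc $w\mapsto (w,0,h(w))$ over $\overline\D$, while each $K_j$ is a ``ruled'' piece obtained by letting $z$ range over $\overline\D$ and $w$ over the arc $J_j\subset\partial\D$, attached to $K_0$ along $\{(w,0,h(w)) : w \in J_j\}$ since $F_j(0,w)=h(w)$. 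Thus $K$ is a compact set which retracts onto $K_0$, and $K_0$ is contractible.

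The second step is to build the neighbourhood with trivial cohomology. Since $h\in\O(D_r,X)$ with $r>1$ and each $F_j\in\O(D_s\times U_j,X)$ with $s>1$, the relevant maps are defined on neighbourhoods of the closed parameter sets, so we have room to thicken. I would take a Stein (or at least cohomologically trivial) neighbourhood $V$ of $\overline{h(D_r)}\cup\bigcup_j\overline{F_j(\overline\D\times J_j)}$ in $X$ — such a $V$ exists because compact subsets of complex manifolds have neighbourhoods biholomorphic to open subsets of Euclidean space after embedding, or one can use the standard fact (as in the proof of Lemma 2.3 in \cite{LarSig:1998}, which is already invoked in this paper) that a neighbourhood of the graph of a disc can be straightened to a polydisc. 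Then $\tilde\omega$ restricted to a neighbourhood $W$ of $K$ inside $\C^2\times V$ lives on a manifold with $H^{(0,1)}=0$ and $H^2=0$ (shrinking $W$ to a product of a polydisc in the $\C^2$ factor with $V$, or a pseudoconvex piece thereof), so Proposition \ref{global_potential} applies and yields a psh potential $\psi$ for $\tilde\omega$ on $W$, which then is a global potential on the neighbourhood of $K$ we sought.

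The third step, which is really the crux, is making sure the straightening is compatible with all the pieces $K_j$ simultaneously. The arcs $J_j$ are disjoint and the discs $U_j$ are pairwise disjoint, so the ruled pieces $K_1,\dots,K_m$ are disjoint from one another; they all meet $K_0$ but along disjoint sub-arcs. This means I can construct the neighbourhood piece by piece: first a tube around $K_0$ straightened to $\D_{1+\epsilon}\times\{0\}\times\D^n$ via a biholomorphism extending the one from Lemma 2.3 in \cite{LarSig:1998}, then enlarge near each arc $J_j$ to accommodate the $z$-direction using the map $(z,w)\mapsto(w,z,F_j(z,w))$, which for small $z$ is a small perturbation of the embedding $w\mapsto(w,0,h(w))$ and hence still an embedding onto a submanifold we can straighten. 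Glueing these (the overlaps occur only near $K_0$, where all descriptions agree because $F_j(0,\cdot)=h$) produces an open $\Omega\supset K$ biholomorphic to an open subset of $\C^{n+2}$ which is a union of a polydisc-like piece and finitely many disjoint ``fins,'' and one checks this $\Omega$ is still $H^{(0,1)}$- and $H^2$-trivial (it deformation retracts onto the straightened image of $K_0$, a polydisc).

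\emph{Main obstacle.} The routine parts — thickening, monotone estimates — are not the issue; the real difficulty is the simultaneous straightening in the glueing step: one must verify that the several local biholomorphisms (the tube around $K_0$ and the fins around each $K_j$) can be chosen to agree on overlaps, or alternatively that the resulting $\Omega$, even if not globally a coordinate chart, still has vanishing $H^{(0,1)}$ and $H^2$ so that Proposition \ref{global_potential} applies. I expect the cleanest route is to avoid glueing biholomorphisms and instead directly exhibit $\Omega$ as a subset of $\C^2\times\D^n$ (after straightening $V$) that is a union of a polydisc with disjoint bumps along disjoint boundary arcs, argue it is homotopy equivalent to a polydisc hence $H^2(\Omega)=0$, and argue $H^{(0,1)}(\Omega)=0$ by shrinking so that $\Omega$ is pseudoconvex (Stein), which is possible since each added fin is attached along a portion of the boundary. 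Once that topological/function-theoretic bookkeeping is done, the conclusion is immediate from Proposition \ref{global_potential}.
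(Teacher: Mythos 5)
There is a genuine gap, and it sits exactly where you locate ``the crux'': the existence of a single cohomologically trivial (Stein and contractible) neighbourhood $\Omega$ of $K$ is asserted but never established, and the justifications you offer do not hold up. The claim that compact subsets of complex manifolds have neighbourhoods biholomorphic to open subsets of Euclidean space (so that a Stein $V\supset \overline{h(D_r)}\cup\bigcup_j F_j(\overline\D\times J_j)$ exists in $X$) is false in general; the whole reason the lemma is set in $\C^2\times X$ rather than in $X$ is that each individual \emph{graph} is a Stein submanifold biholomorphic to a polydisc, to which Siu's theorem applies, whereas the images in $X$ can be arbitrary compact sets. Moreover Siu's theorem gives $m+1$ \emph{separate} tubular neighbourhoods: the union of the graphs is not a closed analytic subvariety of any ambient open set containing a neighbourhood of $K$ (the two-dimensional pieces have edges over $\partial U_j$), so you cannot apply it to the union. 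Finally, the union of the straightened tube around $K_0$ with the fins around the $K_j$ is a union of Stein opens glued by nontrivial transition biholomorphisms, and such a union need not be Stein; ``shrink until it is pseudoconvex'' is precisely the step that requires proof. The paper itself points out that its predecessor \cite{Mag:2010} proved this by constructing a Stein manifold in $\C^4\times X$, and that the present argument is designed to avoid exactly that construction.

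What the paper does instead is never to produce one good neighbourhood. It takes the $m+1$ polydisc neighbourhoods $\Phi_j(U_j\times D_s^{n+1})$ of the individual graphs, obtains a psh potential $\psi_j$ on each from Proposition \ref{global_potential}, and then glues by hand. The point of the elaborate system of sets $V_0, V_j, B_j$ with $V_0\cap V_j\subset B_j$, the $V_j$'s and $B_j$'s pairwise disjoint and connected, is that the nerve of the cover is a star with connected overlaps: a partition of unity first patches the forms $d^c\psi_j$ into a single $\eta$ with $d\eta=\tilde\omega$ on $V_0\cup V$, and then the local $d^c$-primitives differ by constants on the connected sets $B_j'$, so they too patch. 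This \v{C}ech-type argument replaces the vanishing of $H^2$ and $H^{(0,1)}$ of an open set --- which you would have to prove --- by the trivially verifiable acyclicity of the cover. To complete your proof you would have to either supply an actual Stein neighbourhood theorem for $K$ or carry out such a gluing; as written the conclusion does not follow.
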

\begin{proof}
	For convenience we let $U_0 = D_r$ and $F_0(z,w) = h(z)$, also
	$\overline 0$ will denote the zero vector in $\C^n$. 
	The graphs of the $F_j$'s are biholomorphic to polydiscs, hence Stein. 
	By slightly shrinking the $U_j$'s and $s$ we can, 
	just as in the proof of Theorem 1.2 in \cite{LarSig:2003}, 
	use Siu's Theorem \cite{Siu:1976} and the proof of Lemma 2.3 in \cite{LarSig:1998} 
	to define 
	biholomorphisms $\Phi_j$ from the polydisc $U_j \times D_s^{n+1}$ onto a neighbourhood of the $K_j$ such that
	\begin{equation}
		\Phi_j(w,z,\overline 0) = (w,z,F_j(z,w)), \qquad w\in U_j, z\in D_s,
		\label{Phi_j}
	\end{equation}
	for $j=1,\ldots,m$ and 
	\begin{equation}
		\Phi_0(w,0,\overline 0) = (w,0,h(w)), \qquad w \in U_0.
		\label{Phi_0}
	\end{equation}
	Furthermore, we may assume that the maps $\Phi_j$ are continuous on the closure of
	$U_j \times D_s^{n+1}$ for $j=0,\ldots,m$.

	For $j=1,\ldots,m$ let $U_j'$ and $U_j''$ be open discs concentric to $U_j$ such that
	$$
		J_j \subset\subset U_j'' \subset\subset U_j' \subset\subset U_j,
	$$
	and $B_j$ a neighbourhood of $\Phi_j(\overline{U_j'} 
	\times \{(0, \overline 0)\})$ defined by
	$$
		B_j = \Phi_j(U_j \times D_{\delta_j}^{n+1})
	$$
	for $\delta_j >0$ small enough so that
	$$
		B_j \subset \Phi_0(U_0 \times D_s^{n+1}),
	$$
	and
	$$
		B_j \cap  K_k = \emptyset, \text{when } k\neq j \text{ and } k\geq 1.
	$$
	This is possible since $\Phi_j(U_j \times \{(0,\overline 0) \}) 
	\subset \Phi_0(U_0\times D_s^{n+1})$
	and $\Phi_j(U_j\times \{(0,\overline 0)\}) \cap K_k = 
	\emptyset$ if $k\neq j$ and $k\geq 1$.

	The compact sets $\Phi_0(\overline{U_0} \setminus U_j' 
	\times \{(0,\overline 0) \})$ and 
	$\Phi_j(\overline{U_j''}\times \overline D_s \times \{ \overline 0 \} )$ are
	disjoint by (\ref{Phi_0}) and (\ref{Phi_j}), and likewise 
	$\Phi_0(\overline{U_j'} \times \{ (0,\overline 0)\} ) \subset\subset B_j$. 
	So there is a $\epsilon_j > 0$ such that
	$$
		\Phi_0(U_0\setminus U_j' \times D_{\epsilon_j}^{n+1}) \cap 
		\Phi_j(U_j''\times D_s \times D_{\epsilon_j}^n) = \emptyset
	$$
	and 
	$$
		\Phi_0(U_j' \times D_{\epsilon_j}^{n+1}) \subset B_j.
	$$
	
	Let $\epsilon_0 = \min\{\epsilon_1,\ldots,\epsilon_m\}$ and define $V_0 = \Phi_0(U_0 \times D_{\epsilon_0}^{n+1})$ and
	$V_j = \Phi_j(U_j''\times D_s \times D_{\epsilon_j}^n)$.

	Furthermore, since the graphs of the $F_j$'s, $\Phi_j(U_j\times D_s \times \{ \overline 0 \})$, are disjoint for $j\geq 1$ we may
	assume $V_j \cap V_k = \emptyset$, and similarly that $B_j \cap B_k = \emptyset$ when $j\neq k$ and $j,k\geq 1$.

	What this technical construction has achieved is to ensure the intersection $V_0 \cap V_j$ is contained in $B_j$, while
	still letting all the sets $V_j$ and $B_j$ be biholomorphic to polydiscs. Then both
	$V = \cup_{j=1}^m V_j$ and $B = \cup_{j=1}^m B_j$ are disjoint unions of polydiscs.
		
	By Proposition \ref{global_potential} there are local potentials $\psi_j$ of $\tilde\omega$ on each of the sets
	$\Phi_j(U_j \times D_s^{n+1})$, $j=1,\ldots,m$.

	Define $\eta' = d^c\psi_0$ on $V_0 \cup B$ and $\eta''$ on $V\cup B$ by 
	$\eta'' = d^c\psi_j$ on $V_j \cup B_j$, this is well defined because the $V_j \cup B_j$'s are
	pairwise disjoint and $V_j \cup B_j \subset \Phi_j(U_j \times D_s^{n+1})$.
	Since $d\eta' - d\eta'' = \tilde\omega-\tilde\omega=0$ on $B$ there is a distribution
	$\mu$ on $B$ satisfying $d\mu=\eta'-\eta''$.

	Let $\chi_1,\chi_2$ be a partition of unity subordinate to the covering
	$\{V_0,V\}$ of $V_0 \cup V$. Then
	$$\eta= \left\{ \begin{array}{ll}
		\eta' - d(\chi_1 \mu) & \text{on } V_0\\
		\eta'' + d(\chi_2 \mu) & \text{on } V
		\end{array}\right.
	$$
	is well defined on $V_0 \cup V$ with $d\eta = \tilde \omega$.
	
	If we repeat the topological construction above for $V_0,\ldots,V_m$ instead of $\Phi_j(U_j\times D_s^{n+1})$ we can
	define sets $V_0',\ldots,V_m'$ and $B_1',\ldots,B_m'$ biholomorphic to polydiscs such that $V_j' \subset V_j$, $B_j' \subset B_j$ and
	$$
		V_0' \cap V_j' \subset B_j' \subset V_0 \cap V_j,
	$$ 
	and both the $B_j'$'s and the $V_j'$'s are pairwise disjoint. Define
	$V' = \cup_{j=1}^m V'_j$.

	Let $\psi'$ be a real distribution defined on $V_0$ satisfying $d^c \psi' = \eta' -d\chi_1\mu$ and 
	let $\psi''$ be a real distribution defined on $V$ satisfying $d^c \psi'' = \eta'' -d\chi_2\mu$. 
	Then $d^c(\psi'-\psi'') = \eta' - \eta'' - d(\chi_1\mu + \chi_2\mu) = 0$.
	Therefore, on each of the connected sets $B_j'$ we have $\psi'-\psi'' = c_j$, for some constant $c_j$.
	Consequently the distribution $\psi$ is well defined on $V_0' \cup V'$ by
	$$\psi= \left\{ \begin{array}{ll}
		\psi' & \text{on } V_0'\\
		\psi + c_j & \text{on } V_j'
		\end{array}\right.
	$$
	since $V_0' \cap V' \subset B'$ and the $V_j'$'s are disjoint.
	It is clear that $dd^c \psi = d\eta = \tilde \omega$ and since $\omega$ is positive we may
	assume $\psi$ is a plurisubharmonic function.	
\end{proof}

We now turn our attention back to the $\omega$-plurisubharmonicity of the
envelope $EH$. We start by showing that it is $\omega$-usc, but 
this is done separately because it needs weaker assumptions than those needed 
in Theorem \ref{th_red} where we show that $EH$ is $\omega$-psh.

\begin{lemma}
	Let $X$ be an $n$-dimensional complex manifold, $H$ a disc functional 
	on $\A_X$, and $\omega = \omega_1 - \omega_2$ the 
	difference of two positive, closed $(1,1)$-currents on $X$.
	The envelope $EH$ is $\omega$-usc if $E\Phi^*H$ is $\Phi^*\omega$-usc
	for every submersion $\Phi$ from a set biholomorphic to a 
	$(n+1)$-dimensional polydisc into $X$.
	\label{lemma_redusc}
\end{lemma}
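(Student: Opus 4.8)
The plan is to show $EH$ is $\omega$-usc by checking the two requirements of Definition \ref{def_usc}: that $EH(a) = \limsup_{X\setminus\sing(\omega)\ni z\to a} EH(z)$ for $a\in\sing(\omega)$, and that for each local potential $\psi$ of $\omega$ on an open set $U$, the sum $EH+\psi$ is usc on $U\setminus\sing(\omega)$ and locally bounded above near $\sing(\omega)$. The first requirement is immediate: it is exactly the definition \eqref{EH_sing} by which $EH$ was extended to $\sing(\omega)$. So the real content is the second requirement, which is a local statement, and locality is what lets us bring in the hypothesis about submersions from polydiscs.

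First I would fix $a\in X$ and a coordinate chart around $a$ biholomorphic to a polydisc; composing with the inclusion of a slightly larger polydisc $\D^{n+1}\to X$ (or rather, projecting $\D^{n+1}$ onto the last $n$ coordinates and then applying the chart) gives a submersion $\Phi$ from a set biholomorphic to an $(n+1)$-dimensional polydisc onto a neighbourhood of $a$. Actually, the cleanest choice is $\Phi:\D\times V\to X$ where $V$ is the chart neighbourhood and $\Phi(z,x)=\iota(x)$ is just the chart map composed with projection, so that $\Phi$ is a submersion and every disc in $V$ lifts trivially (send $f$ to $z\mapsto(0,f(z))$, say). By the remark following \eqref{EPhiH}, since every disc in $X$ near $a$ — more precisely every disc landing in $\Phi(Y)$ — has a lifting through $\Phi$, we get equality $\Phi^*EH = E\Phi^*H$ on $Y$. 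By hypothesis $E\Phi^*H$ is $\Phi^*\omega$-usc on $Y$.

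Next I would transport the $\Phi^*\omega$-upper semicontinuity of $E\Phi^*H=EH\circ\Phi$ down to $\omega$-upper semicontinuity of $EH$ on the image. The key compatibility is that if $\psi$ is a local potential of $\omega$ near $a$, then $\psi\circ\Phi$ is a local potential of $\Phi^*\omega$ near the corresponding point of $Y$ (this is exactly how $\Phi^*\omega$ is defined for a submersion, and since $\Phi$ in our construction is essentially the identity in the base direction, $\sing(\Phi^*\omega)=\Phi^{-1}(\sing(\omega))$). So $(EH\circ\Phi) + (\psi\circ\Phi) = (EH+\psi)\circ\Phi$ is usc on $Y\setminus\sing(\Phi^*\omega)$ and locally bounded above near $\sing(\Phi^*\omega)$. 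Since $\Phi$ is a submersion it is open, and in our setup $\Phi$ restricted to a slice $\{z_0\}\times V$ is a biholomorphism onto a neighbourhood of $a$; pushing the usc-ness and local-boundedness along this local biholomorphism gives that $EH+\psi$ is usc on a neighbourhood of $a$ intersected with $X\setminus\sing(\omega)$ and bounded above near points of $\sing(\omega)$. As $a$ was arbitrary, $EH+\psi$ has these properties on all of $U\setminus\sing(\omega)$, which is the second clause of Definition \ref{def_usc}. Combined with the first clause (the definition \eqref{EH_sing}), $EH$ is $\omega$-usc.

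The main obstacle I anticipate is purely bookkeeping rather than conceptual: making sure the submersion $\Phi$ one picks genuinely has the lifting property for all relevant discs (so that \eqref{EPhiH} is an equality), that $\sing(\Phi^*\omega)$ matches $\Phi^{-1}(\sing(\omega))$ so the singular-set clause of $\omega$-usc transfers correctly, and that ``locally bounded above near $\sing(\omega)$'' and the $\limsup$ extension are preserved when passing along $\Phi$ and its local inverse. One should also confirm that constant discs (or the trivial liftings) land in the domain where $H$, and hence $EH$, is actually defined, i.e.\ that the relevant discs do not lie entirely in $\sing(\omega)$ — but this is automatic for points outside $\sing(\omega)$ and the singular points are handled by the $\limsup$ definition. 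None of these steps requires new ideas; they are the routine glue that makes the reduction-to-polydiscs strategy work, which is why the lemma is separated out from the harder Theorem \ref{th_red}.
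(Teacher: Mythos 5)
There is a genuine gap at the heart of your argument: the claimed equality $\Phi^*EH = E\Phi^*H$ for the trivial projection $\Phi\colon \D\times V\to V\subset X$ over a chart neighbourhood is false in general. The remark after \eqref{EPhiH} gives equality only when \emph{every} disc $f\in\A_X$ lifts through $\Phi$, and the discs competing in the infimum $EH(x)=\inf\{H(f);f\in\A_X,\,f(0)=x\}$ are arbitrary discs in $X$ centered at $x$; they need not stay inside $V$, so they do not lift to $\D\times V$. What your construction actually yields is $E\Phi^*H((0,x))=\inf\{H(f);f\in\A_V,\,f(0)=x\}$, which dominates $EH(x)$ and can be strictly larger --- this is precisely the distinction between $EH|_V$ and the envelope of $H$ restricted to $\A_V$ that the paper warns about after Lemma \ref{H_ineq}. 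With only the inequality $\Phi^*EH\leq E\Phi^*H$ in hand, the upper semicontinuity of $E\Phi^*H+\Phi^*\psi$ gives you an upper bound for $EH+\psi$ near $x$ in terms of $E\Phi^*H((0,x))+\psi(x)$, but that quantity may exceed $(EH+\psi)(x)$, so upper semicontinuity of $EH+\psi$ does not follow.

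The fix, which is what the paper's proof does, is to adapt the submersion to a near-optimal disc rather than to a chart: given $\beta>(EH+\psi)(x)$, choose $g\in\A_X$ with $g(0)=x$ and $H(g)+\psi(x)<\beta$, embed a neighbourhood of the graph $\{(w,g(w))\}$ in $\C\times X$ biholomorphically into a polydisc $D_s^{n+1}$ (Lemma 2.3 of \cite{LarSig:1998}), and take $\Phi$ to be the projection to $X$. The lifting $\tilde g(w)=(w,g(w))$ now lives in the domain of $\Phi$, so $E\Phi^*H((0,x))\leq\Phi^*H(\tilde g)=H(g)$, which pins $E\Phi^*H((0,x))+\psi(x)$ below $\beta$; the hypothesis then gives a neighbourhood $W_0\times W$ of $(0,x)$ on which $E\Phi^*H+\Phi^*\psi<\beta$, and the always-valid inequality $EH(z)\leq E\Phi^*H((0,z))$ transfers this to $EH+\psi<\beta$ on $W$. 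No equality of envelopes is needed, only the two one-sided estimates. (Your handling of the $\limsup$ clause at $\sing(\omega)$ and the transfer of potentials $\psi\mapsto\psi\circ\Phi$ is fine; you should also note, as the paper does, the preliminary check that $EH+\psi<+\infty$ off $\sing(\omega)$, which does follow from your trivial-projection argument since there $\Phi^*EH\leq E\Phi^*H<+\infty$ is the direction you need.)
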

\begin{proof}
	To show that $EH+\psi$ does not take the value $+\infty$ at $x \in X \setminus \sing(\omega)$,
	let $U$ be a coordinate polydisc in $X$ centered at $x$ and $\psi$ a local potential of $\omega$
	on $U \subset X$. Then by (\ref{EPhiH}),
	\begin{equation*}
		EH(x)+\psi(x) = EH(\Phi(0,x))+\psi(\Phi(0,x))) \leq E\Phi^*H( (0,x)) + \psi(\Phi(0,x)) < +\infty,
	\end{equation*}
	where $\Phi:\D \times U \to U$ is the projection.

	Let $\beta > EH(x)$ and $g\in A_X$ such that $H(g) < \beta$. By a now familiar argument in Lemma
	2.3 in \cite{LarSig:1998} there is a biholomorphism $\Psi$ from a neighbourhood of the graph
	$\{ (w,g(w)) ; w \in \D \}$ into $D_s^{n+1}$, $s>1$ such that $\Psi(w,g(w)) = (w,\overline 0)$.
	If $\Phi$ is the projection $\C \times X \to X$ then $\Phi^*\psi = \psi \circ \Phi$ is a local potential 
	of $\Phi^*\omega$ on $\C \times U$. Now, if $\tilde g \in \A_{\C\times X}$ is the lifting
	$w \mapsto (w,g(w))$ of $g$ then by (\ref{EPhiH}),
	\begin{equation*}
		E\Phi^*H( (0,x) ) + \psi(\Phi( (0,x) ) \leq 
		\Phi^*H(\tilde g) + \psi(\Phi( (0,x) )) =
		H(g) + \psi(x) < \beta.
	\end{equation*} 
	By assumption there is a neighbourhood $W_0 \times W \subset \C \times U$ of 
	$(0,x)$ such that for $(z_0,z) \in W_0 \times W$,
	\begin{equation*}
		E\Phi^*H( (z_0,z) ) + \psi(\Phi( (z_0,z) )) < \beta.
	\end{equation*}
	Then by (\ref{EPhiH}), $EH(z)+\psi(z) \leq \beta$ for $z \in W$ which 
	shows that $EH+\psi$ is usc outside of $\sing(\omega)$ and by 
	(\ref{EH_sing}), the definition of 
	$EH$ at $\sing(\omega)$, we have shown that $EH$ is $\omega$-usc.
\end{proof}

The following theorem shows that an envelope $EH$ is $\omega$-psh if it satisfies some conditions which are 
almost identical to those in Theorem 4.5 in \cite{Mag:2010}. These conditions are
very similar to those posed upon the envelope in Theorem \ref{th_larsig} when $\omega=0$.

\begin{theorem}{(Reduction theorem):}
	Let $X$ be a complex manifold, $H$ a disc functional on $\A_X$ and 
	$\omega = \omega_1 - \omega_2$ the
	difference of two positive, closed $(1,1)$-currents on $X$.
	The envelope $EH$ is $\omega$-plurisubharmonic if it satisfies the following.
	\begin{enumerate}
		\item[(i)] $E\Phi^*H$ is $\Phi^*\omega$-plurisubharmonic 
			for every holomorphic submersion $\Phi$ from a complex 
			manifold where $\Phi^*\omega$ has a global potential.
		\item[(ii)] There is an open cover of $X$ by subsets $U$, with 
			$\omega$-pluripolar subsets $Z \subset U$ and local 
			potentials $\psi$ on $U$, $\psi^{-1}(\{-\infty\}) \subset Z$, 
			such that for every $h \in \A_U$ with $h(\overline \D) \not\subset Z$, 
			the function $t \mapsto \big(H(h(t)) + \psi(h(t))\big)^\dagger$ is dominated by
			an integrable function on $\T$.
		\item[(iii)] If $h \in \A_X$, $h(0) \notin \sing(\omega)$, 
			$t_0 \in \T \setminus h^{-1}(\sing(\omega))$ and $\epsilon > 0$,
			then $t_0$ has a neighbourhood $U$ in $\C$ and there is a local 
			potential $\psi$ in a neighbourhood of $h(U)$
			such that for all sufficiently small arcs $J$ in $\T$ 
			containing $t_0$ there is a holomorphic map
			$F: D_r \times U \to X$, $r>1$, such that $F(0,\cdot) = h|_U$ and
			$$
				\frac{1}{\sigma(J)} \int_J \big( H(F(\cdot,t))+
				\psi(F(0,t)) \big)\ d\sigma(t) \leq (EH+\psi)(h(t_0)) + 
				\epsilon.
			$$
	\end{enumerate}
	\label{th_red}
\end{theorem}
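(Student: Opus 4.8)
The plan is to reduce the $\omega$-plurisubharmonicity of $EH$ to the already-established case $\omega=0$ of Theorem~\ref{th_larsig}, working locally around a local potential. Fix a coordinate polydisc $U$ in $X$ with a local potential $\psi$ of $\omega$. Using Proposition~\ref{prop_biject} (with $\omega'=0$ and the global potential $\psi$ on $U$), it suffices to show that the function $(EH+\psi)^\dagger$ is plurisubharmonic on $U$. The $\omega$-upper semicontinuity of $EH$, hence that $(EH+\psi)^\dagger$ is a well-defined usc function, is exactly Lemma~\ref{lemma_redusc}: condition~\textsl{(i)} of the present theorem applied to projections $\Phi$ from $(n+1)$-dimensional polydiscs gives $E\Phi^*H$ is $\Phi^*\omega$-psh, hence in particular $\Phi^*\omega$-usc, so Lemma~\ref{lemma_redusc} applies. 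It thus remains to verify the sub-mean-value inequality for $(EH+\psi)^\dagger$ along analytic discs in $U$.

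First I would set up the disc functional whose envelope is $(EH+\psi)^\dagger$. For $g\in\A_U$ with $g(\D)\not\subset\sing(\omega)$, Lemma~\ref{le_globalpotential}-type reasoning shows that if $\psi$ is a potential near $g(\overline\D)$, then $H(g)+\psi(g(0))$ plays the role of a Poisson-type functional value; more precisely one defines $\tilde H(g)=H(g)+\psi(g(0))$, so that on $U\setminus\sing(\omega)$ we have $E\tilde H=(EH+\psi)$ by the translation-invariance of the infimum (the term $\psi(g(0))=\psi(x)$ is constant over discs with $g(0)=x$). The strategy is then to check that $\tilde H$ satisfies the three hypotheses of Theorem~\ref{th_larsig} on $U$, which will give that $E\tilde H$ — and hence $(EH+\psi)^\dagger$ after taking the usc extension over $\sing(\omega)$ — is plurisubharmonic on $U$. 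Condition~\textsl{(ii)} of Theorem~\ref{th_larsig} is precisely condition~\textsl{(ii)} here, with $Z$ the $\omega$-pluripolar set and using that $(H(h(t))+\psi(h(t)))^\dagger$ is dominated by an integrable function. Condition~\textsl{(iii)} of Theorem~\ref{th_larsig} follows from condition~\textsl{(iii)} here by rewriting $\tilde H(F(\cdot,t)) = H(F(\cdot,t)) + \psi(F(0,t))$ and noting $F(0,t)=h(t)$, so the displayed inequality in \textsl{(iii)} is exactly the required lower-integral estimate (with lower integral replaced by the ordinary integral, which only strengthens it); the exceptional set $h^{-1}(\sing(\omega))$ on $\T$ is handled because it is a closed set of measure zero on which integrability issues are irrelevant, and the limsup extension~(\ref{EH_sing}) then recovers the inequality everywhere.

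The main obstacle — and the reason Lemma~\ref{lemma_potential} was proved — is verifying condition~\textsl{(i)} of Theorem~\ref{th_larsig}, namely that $E\Phi^*\tilde H$ is plurisubharmonic for every holomorphic submersion $\Phi$ from a domain of holomorphy in affine space into $U$. Here one cannot simply quote condition~\textsl{(i)} of the present theorem, because $\Phi^*\omega$ need not have a global potential on an arbitrary domain of holomorphy; the potential $\psi\circ\Phi$ is only defined where $\psi\circ\Phi$ makes sense, and for a general submersion into $U$ this is fine since $\psi$ is defined on all of $U$, so actually $\Phi^*\omega=dd^c(\psi\circ\Phi)$ does have the global potential $\psi\circ\Phi$ on the whole domain of $\Phi$. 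Hence condition~\textsl{(i)} here applies directly to such $\Phi$, giving that $E\Phi^*H$ is $\Phi^*\omega$-psh, and then translating by the global potential $\psi\circ\Phi$ (Proposition~\ref{prop_biject} again, or directly) yields that $E\Phi^*\tilde H = E\Phi^*H + \psi\circ\Phi$ is plurisubharmonic. The role of Lemma~\ref{lemma_potential} is more delicate: it is needed to establish condition~\textsl{(i)} of the present theorem in the first place in applications, i.e.~to produce a global potential of $\Phi^*\omega$ in a neighbourhood of the graphs arising from condition~\textsl{(iii)}, so that the reduction to the global-potential case of Section~4 goes through. Within the proof of Theorem~\ref{th_red} itself, the careful point is that conditions~\textsl{(ii)} and~\textsl{(iii)} are stated in terms of the $\dagger$-extension $(H(h(t))+\psi(h(t)))^\dagger$ and of $(EH+\psi)(h(t_0))$, so one must check that the translated functional $\tilde H$ interacts correctly with the usc extensions over $\sing(\omega)$ — this is where the $\omega$-usc bookkeeping from Section~2, particularly the distinction between $\dagger$ and $*$, must be invoked to conclude that $(E\tilde H)^* = (EH+\psi)^\dagger$ and hence that $EH$ itself, reconstructed via $EH = (EH+\psi)^\dagger - \psi$ off $\sing(\omega)$ and extended by~(\ref{EH_sing}), lies in $\PSH(X,\omega)$.
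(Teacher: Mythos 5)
Your reduction to Theorem~\ref{th_larsig} on a coordinate polydisc $U$ has a genuine gap at its foundation: the identity $E\tilde H=(EH+\psi)$ on $U$ is false in general. If $\tilde H(g)=H(g)+\psi(g(0))$ is a functional on $\A_U$, then $E\tilde H(x)$ is an infimum over discs \emph{contained in} $U$, whereas $EH(x)$ is an infimum over all discs in $X$ through $x$; one only gets $E\tilde H\geq EH+\psi$, possibly strictly. (The paper flags exactly this distinction after Lemma~\ref{H_ineq}: the restriction of the envelope is not the envelope of the restricted functional.) The same problem resurfaces when you try to feed hypothesis~\textsl{(iii)} of the present theorem into condition~\textsl{(iii)} of Theorem~\ref{th_larsig} for the localized functional: the maps $F$ supplied by hypothesis~\textsl{(iii)} take values in $X$, not in $U$, so they are not admissible data for a disc functional on $\A_U$, and the near-optimal discs attached along $h(\T)$ may leave every coordinate chart. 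Consequently, plurisubharmonicity of $E\tilde H$ on $U$ --- even if you could establish it --- would say nothing about $(EH+\psi)^\dagger$.

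This is precisely why the paper does \emph{not} localize. Its proof works directly with the subaverage inequality for $EH\circ h+\psi\circ h$: it uses hypothesis~\textsl{(iii)} and compactness to choose finitely many arcs $J_j$ and maps $F_j:D_s\times U_j\to X$, lifts everything to $\C^2\times X$, and then invokes Lemma~\ref{lemma_potential} to manufacture a global potential of $\Phi^*\omega$ on a neighbourhood $V$ of the union of the graph of $h$ and the graphs of the $F_j$'s --- a set that is not contained in any single chart where a local potential $\psi$ lives. Only then can hypothesis~\textsl{(i)} be applied (to the projection $\Phi$ restricted to $V$) to produce the single competitor disc $g$ satisfying $H(g)+\psi(h(0))\leq\int_\T v\circ h\,d\sigma+\epsilon$. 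So Lemma~\ref{lemma_potential} is not, as you suggest, merely needed ``in applications'' to verify~\textsl{(i)}; it is the load-bearing step inside the proof of the Reduction Theorem itself, and your outline has no substitute for it. (Your treatment of the usc part via Lemma~\ref{lemma_redusc}, and of the exceptional set $h^{-1}(\sing(\omega))$ by a measure-zero/pluripolar argument, is in line with the paper; the gap is confined to, but fatal for, the subaverage step.)
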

\begin{proof}
	By Proposition \ref{equi} we need to show that 
	$EH \circ h$ is $h^*\omega$-subharmonic for every $h\in \A_X$, 
	$h(\D) \not\subset \sing(\omega)$ and that $EH$ is $\omega$-usc.
	
	The $\omega$-upper semicontinuity of $EH$ follows from Lemma \ref{lemma_redusc} so 
	we turn our attention to the subaverage property.
	We assume $\psi = \psi_1 - \psi_2$ is a local potential of $\omega$ defined on an open 
	set $U$. As with plurisubharmonicity, $\omega$-plurisubharmonicity is a local property
	so it is enough to prove the subaverage property for $h \in \A_U$, $h(0) \notin \sing(\omega)$.
	Our goal is therefore to show that
	\begin{equation}\label{in0}
		EH(h(0)) + \psi(h(0)) \leq \int_\T (EH \circ h + \psi\circ h)^\dagger\, d\sigma.
	\end{equation}
	This is automatically satisfied if $EH(h(0)) = -\infty$, and since $EH$ is $\omega$-usc
	it can only take the value $+\infty$ on $\sing(\omega)$. We may therefore assume $EH(h(0))$
	is finite.
	It is sufficient to show that for every $\epsilon > 0$ and every 
	continuous function $v:U \to \R$
	with $v \geq (EH+\psi)^\dagger$, there exists $g \in \A_X$ such that $g(0) = h(0)$ and
	\begin{equation}\label{in00}
		H(g) + \psi(h(0)) \leq \int_\T v \circ h\, d\sigma + \epsilon.
	\end{equation}
	Then by definition of the envelope, 
	$EH(h(0)) + \psi(h(0)) \leq \int_\T v\circ h \, d\sigma + \epsilon$
	for every $v$ and $\epsilon$, and (\ref{in0}) follows.
	
	Let $r>1$ such that $h$ is holomorphic on $D_r$.
	In the proof of Theorem 1.2 in \cite{LarSig:2003}, L\'arusson and Sigurdsson show that a 
	function satisfying
	the subaverage property for all holomorphic discs in $X$ not lying in a pluripolar 
	set $Z$ is plurisubharmonic not only on $X \setminus Z$ but on $X$.
	We may therefore assume that $h(\overline \D) \not\subset Z$.
	
	Since $h(0) \notin \sing(\omega)$, we have $\psi_1\circ h(0) \neq -\infty$ and 
	$\psi_2\circ h(0) \neq -\infty$. Then, by the subaverage property of the 
	subharmonic functions $\psi_1\circ h$ and $\psi_2\circ h$, the set 
	$h^{-1}(\sing(\omega))$ is of measure zero with respect to the arc length
	measure $\sigma$ on $\T$. The set $h(\T) \setminus \sing(\omega)$ is 
	therefore dense in $h(\T)$ and by a compactness argument along with property \textsl{(iii)} 
	we can find a finite number of closed arcs
	$J_1,\ldots,J_m$ in $\T$, each contained in
	an open disc $U_j$ centered on $\T \setminus \sing(\omega)$ and holomorphic maps 
	$F_j: D_s \times U_j \to X$, $s \in ]1,r[$
	such that $F_j(0,\cdot) = h|_{U_j}$ and,
	using the continuity of $v$, such that
	\begin{equation} \label{in1}
		\underline{\int_{J_j}} \Big( H(F_j(\cdot,t))+\psi (F(0,t)) \Big)\, d\sigma(t) 
		\leq \int_{J_j} v\circ h\, d\sigma + \frac{\epsilon}{4}\sigma(J_j).
	\end{equation}
	We can shrink the discs $U_j$ such that they are relatively compact in $D_r$ and 
	have mutually disjoint closure.
	Furthermore, by the continuity of $v$ we may assume 
	\begin{equation}\label{in2}
		\int_{\T \setminus \cup_j J_j} |v\circ h|\, d\sigma < \frac{\epsilon}{4}
	\end{equation}
	and by condition \textsl{(ii)} we may assume
	\begin{equation}\label{in3}
		\overline {\int_{\T \setminus \cup_j J_j}} H(h(w))+
		\psi(h(w))\, d\sigma(w) < \frac{\epsilon}{4}.
	\end{equation}
		
	Our submersion $\Phi$ will be the projection $\C^2 \times X \to X$. The manifold in $\C^2 \times X$ 
	where $\Phi^*\omega$ has a global potential will be a neighbourhood of the 
	union of the graphs of $h$,
	$$
	K_0 = \{ (w,0,h(w)) ; w \in \overline \D \},
	$$
	and the graphs of the $F_j$'s,
	$$
	K_j = \{ (w,z,F_j(z,w)) ; w \in J_j, z\in \overline \D \}.
	$$
	
	By applying Lemma \ref{lemma_potential} to both
	$\omega_1$ and $\omega_2$ there is neighbourhood $V$ of $K=\cup_{j=0}^m K_j$ with a potentials $\Psi_1$
	of $\Phi^*\omega_1$ and $\Psi_2$ of $\Phi^*\omega_2$. Then $\Psi = \Psi_1 - \Psi_2$ is a potential
	of $\Phi^*\omega$. The $\Phi^*\omega$-plurisubharmonicity of $E\Phi^*H$ given
	by condition \textsl{(i)} ensures 
	\begin{equation}
		E\Phi^*H(\tilde h(0)) + \Phi^*\psi(\tilde h(0)) 
		\leq \int_\T (E\Phi^*H \circ \tilde h + \Phi^*\psi \circ \tilde h)^\dagger\, d\sigma,
		\label{ineq_cond1}
	\end{equation} 
	where $\tilde h$ is the lifting $w\mapsto (w,0,h(w))$ of $h$ to 
	$V \subset \C^2 \times X$.
	
	We know $\Phi^*EH(\tilde h(0)) \leq E\Phi^*H(\tilde h(0))$ and since $\Phi^*EH(\tilde h(0)) = EH(h(0)) \neq -\infty$
	there is a disc $\tilde g \in \A_V$ such that $\tilde g(0) = \tilde h(0)$ and 
	\begin{equation} 
		\Phi^*H(\tilde g) \leq E\Phi^*H(\tilde h(0)) + \frac \epsilon 4.
		\label{ineq_tildeg}
	\end{equation}
	Let $g = \Phi \circ \tilde g$ be the projection of $\tilde g$ to $X$, then $g(0) = h(0)$ and
	$H(g) = \Phi^*H(\tilde g)$. Because the local potential $\Phi^*\psi$ of $\Phi^*\omega$ 
	satisfies $\Phi^*\psi(\tilde h) = \psi(h)$. The inequalities (\ref{ineq_cond1}) and (\ref{ineq_tildeg}) above then
	imply that
	\begin{equation}
		H(g) + \psi(h(0)) \leq \int_\T (E\Phi^*H\circ \tilde h + \psi\circ h)\, d\sigma + \frac \epsilon 4.
		\label{ineq_tildeg2}
	\end{equation}
	For every $j=1,\ldots,m$ and $w \in J_j$ we have
	\begin{equation*}
		E\Phi^*H(\tilde h(w)) \leq \Phi^*H( (w,\cdot,F_j(\cdot,w) ) ) = H(F_j(\cdot,w)),
	\end{equation*}
	because $z\mapsto (w,z,F_j(z,w))$ is a disc in $K$ with center $\tilde h(w)$.

	This means, by (\ref{in1}),
	\begin{equation}
		\int_{J_j} (E\Phi^*H(\tilde h) + \psi \circ h)\, d\sigma \leq \int_{J_j} v\circ h\, d\sigma 
		+ \frac \epsilon 4 \sigma(J_j).
		\label{in4}
	\end{equation}

	But if $w \in \T \setminus \cup_j J_j$ then
	\begin{equation*}
		E\Phi^*H(\tilde h(w)) \leq \Phi^*H(\tilde h(w)) = H(h(w)),
	\end{equation*}
	where $\tilde h(w)$ and $h(w)$ on the right are the constant discs at $\tilde h(w)$ and $h(w)$.
	This means, by (\ref{in3}),
	\begin{equation}
		\int_{\T \setminus \cup_j J_j} (E\Phi^*H(\tilde h) + \psi\circ h)\, d\sigma \leq \frac \epsilon 4
		\label{in5}
	\end{equation}

	Then, first by combining inequality (\ref{ineq_tildeg2}) with (\ref{in4}) and (\ref{in5}), 
	and then by (\ref{in2}), we see that
	\begin{equation*}
		H(g) + \psi(h(0)) \leq 
		\int_{\cup_j J_j} v\circ h 
		+ \frac \epsilon 4 \sigma(\cup_j J_j) + \frac \epsilon 4 + \frac \epsilon 4
		\leq \int_\T v \circ h + \epsilon.
	\end{equation*}
	This shows that the disc $g$ satisfies (\ref{in1}) and we are done.
\end{proof}

\begin{prooftx}{Proof of Theorem \ref{th_main} when $\phi_2=0$}
	Finally, we can prove Theorem \ref{th_main} when $\phi_2=0$ by showing that $H_{\omega,\phi}$ satisfies the
	three condition in Theorem \ref{th_red}.
	
	Condition \textsl{(i)} in \ref{th_red} follows from the proof in Section 4. If $h \in \A_X$ and
	$\psi$ is local potential as in Theorem \ref{th_red}, then condition \textsl{(ii)} follows from the fact
	that $H(h(t)) + \psi(h(t)) = (\phi(h(t)) + \psi_1(h(t))) - \psi_2(h(t))$ is the difference of an usc 
	function and a psh function. The first term is bounded above on $\T$ and the second one is integrable
	since $h(\D) \not\subset \sing(\omega)$.
	
	Let $h \in \A_X$, $\epsilon >0$ and $t_0 \in \T \setminus h^{-1}(\sing(\omega))$ be as in
	condition \textsl{(iii)} and $\psi$ a local potential for $\omega$ in a neighbourhood $V'$ of
	$x=h(t_0)$. Let $\beta > EH_{\omega,\phi}(x) + \psi(x)$ and $\epsilon >0$ such that   
	$EH_{\omega,\phi}(x) + \psi(x) + \epsilon < \beta$. Then there is a $f\in \A_X$ such that
	$f(0)=x$ and $H_{\omega,\phi}(f) + \psi(x) \leq \beta - \epsilon/2$. 
	By Lemma 2.3 in	\cite{LarSig:1998} there is a neighbourhood $V$ of $x$ in $X$, 
	$r>1$ and a holomorphic map $\tilde F:D_r \times V \to X$ such that 
	$\tilde F(\cdot,x)=f$ on $D_r$ and $\tilde F(0,z) = z$ on $V$. 
	Define $U = h^{-1}(V' \cap V)$ and $F:D_r \times U \to X$ by $F(s,t)=\tilde F(s,h(t))$,
	then by (\ref{riesz}),
	\begin{equation}\label{riesz2}
		\big(H_{\omega,\phi}(F(\cdot,t)) + \psi(F(0,t))\big)^\dagger = 
		\int_\T (\phi + \psi)^\dagger \circ F(s,t)\, d\sigma(s).
	\end{equation}
	Since the integrand is usc on $D_r \times U$, then (\ref{riesz2}) is an
	usc function of $t$ on $U$ by Lemma \ref{lemma_composition}. 
	That allows us by shrinking $U$ to assume that
	$$
		\big(H_{\omega,\psi}(F(\cdot,t))+\psi(F(0,t))\big)^\dagger 
		\leq H_{\omega,\phi}(F(\cdot,t_0)) + \psi(F(0,t_0)) + \frac \epsilon 2
	$$
	for $t \in U$. Then by the definition of $f = F(\cdot,t_0)$
	$$
		\big(H_{\omega,\phi}(F(\cdot,t)) + \psi(F(0,t))\big)^\dagger < 
		EH_{\omega,\phi}(x) + \psi(x) + \epsilon, \quad \text{for } t \in U.
	$$
	Condition \textsl{(iii)} is then satisfied for all arcs $J$ in $\T \cap U$.
\end{prooftx}

We now finish the proof of our main theorem by showing how the function $\phi_2$ can be integrated
into $\omega$ and then previous result applied. So, subtracting the function $\phi_2$ from $\phi_1$ can be thought of 
as just shifting the class $\PSH(X,\omega)$ by $-dd^c \phi_2$.
\begin{prooftx}{End of proof of Theorem \ref{th_main}}	
	We define the current $\omega' = \omega - dd^c \phi_2$ and use the bijection, $u' \mapsto u'- \phi_2=u$ between 
	$\PSH(X,\omega')$ and $\PSH(X,\omega)$ from Proposition \ref{prop_biject}. 
	Since the positive part of $\omega$ and $\omega'$ is the same, it is equivalent for 
	$\phi_1$ to be $\omega_1$-usc and $\omega_1'$-usc. Then
	Theorem \ref{th_main} can be applied to $\omega'$ and $\phi_1$, and 
	for every $x \notin \sing(\omega') = \sing(\omega) \subset \phi_2^{-1}(-\infty)$ 
	 we infer
	\begin{eqnarray*}
		&& \sup\{u(x) ; u \in \PSH(X,\omega), u \leq \phi_1 - \phi_2 \}\\
		&=& \sup\{u'(x) - \phi_2(x) ; u' \in \PSH(X,\omega'), u'-\phi_2 \leq \phi_1 - \phi_2 \}\\
		&=& \sup\{u'(x)  ; u' \in \PSH(X,\omega'), u' \leq \phi_1 \} - \phi_2(x) \\
		&=& \inf\{ -R_{f^*\omega'}(0) + \int_\T \phi_1 \circ f\, d\sigma ;  
		f \in \A_X, f(0) = x \} - \phi_2(x)\\
		&=& \inf\{ -R_{f^*\omega}(0) +R_{f^*dd^c \phi_2}(0) - \phi_2(x) + \int_\T \phi_1 \circ f\, d\sigma ;  
		f \in \A_X, f(0) = x \}\\
		&=& \inf\{ -R_{f^*\omega}(0) + \int_\T (\phi_1-\phi_2) \circ f\, d\sigma ;  f \in \A_X, f(0) = x \}.
        \end{eqnarray*}
	The last equality follows from the Riesz representation (\ref{riesz}) applied to the
	psh function $\phi_2$, which gives
	$\phi_2(x) = R_{f^*dd^c \phi_2}(0) + \int_\T \phi_2 \circ f\, d\sigma$. We also used the fact that
	$R_{f^*\omega}$ is linear in $\omega$.

	To finish the proof we need to show that the equality  
	\begin{multline} \sup\{u(x) ; u \in \PSH(X,\omega), u \leq \phi_1 - \phi_2 \}\\
		= \inf\{ -R_{f^*\omega}(0) + \int_\T (\phi_1-\phi_2) \circ f\, d\sigma ;  f \in \A_X, f(0) = x \},
		\label{final_eq}
	\end{multline}
	holds also on $\phi_2^{-1}(-\infty) \setminus \sing(\omega)$.

	The right hand side of (\ref{final_eq}) is $\omega$-usc by Lemma 
	\ref{lemma_redusc}, and it is equal to 
	the function $EH_{\omega',\phi_1}-\phi_2$ on $X\setminus \sing(\omega')$. 
	Now assume $\psi$ is a local
	potential of $\omega$, then $-\phi_2 + \psi$ is a local potential for
	$\omega'$. The functions $(EH_{\omega',\phi_1}-\phi_2+\psi)^\dagger$ and
	$(EH_{\omega,\phi} + \psi)^\dagger$ are then two usc functions which are
	equal almost everywhere, thus the same. 
	Furthermore, since $EH_{\omega',\phi_1}$ is $\omega'$-psh we see that 
	$EH_{\omega,\phi}$ is $\omega$-psh. This 
	shows that $EH_{\omega,\phi}$ is in the family 
	$\{ u\in \PSH(X,\omega), u \leq \phi\}$, and since 
	$\sup\{u \in \PSH(X,\omega); u\leq \phi\} \leq EH_{\omega,\phi}$ 
	by (\ref{fund_ineq}) we have an equality
	not only on $X\setminus \sing(\omega')$ but on $X\setminus \sing(\omega)$,
	i.e.~(\ref{final_eq}) holds on $X\setminus \sing(\omega)$.
\end{prooftx}

\bibliographystyle{siam}
\bibliography{bibref}

\bigskip
{\small
Science Institute, University of Iceland, Dunhaga 3, IS-107
Reykjavik, Iceland

E-mail: bsm@hi.is
}

\end{document}